\documentclass[oneside,11pt]{article}
\usepackage{epsfig, amsfonts, amsmath, amsthm,amsbsy}
\usepackage{color}
\usepackage{subfigure}

\newtheorem{theorem}{Theorem}
\newtheorem{lemma}{Lemma}

\newtheorem{remark}{Remark}

\newtheorem*{example-non}{Example}

\def\ve{\varepsilon}
\def\vr{\varepsilon}
\def\ds{\displaystyle}
  
  \def\cbr{\color{red}}

 \begin{document}

 \title{Parameter-uniform numerical methods for singularly perturbed linear transport problems\thanks{This research was partially supported  by the Institute of Mathematics and Applications (IUMA), the projects PID2019-105979GB-I00 and PGC2018-094341-B-I00 and the Diputaci\'on General de Arag\'on (E24-17R).}}
\author{J.L Gracia    \thanks {IUMA and Department of Applied Mathematics, University of Zaragoza, Spain;   email: {jlgracia@unizar.es} }    \and A. Navas-Montilla   \thanks {Centro Universitario de la Defensa, Zaragoza, Spain; email: anavas@unizar.es }  \and
        E. O'Riordan \thanks { School of Mathematical Sciences, Dublin City
University, Dublin 9, Ireland;
 email: {eugene.oriordan@dcu.ie}
}
}

\maketitle

\begin{abstract}

Pointwise accurate numerical methods are constructed and analysed for three classes of singularly perturbed first order transport problems. The methods involve piecewise-uniform Shishkin meshes and the numerical approximations are shown to be parameter-uniformly  convergent in the maximum norm. A transport problem from the modelling of fluid-particle interaction is formulated and used as a test problem for these numerical  methods. Numerical results are presented to illustrate the performance of the numerical methods and to confirm the theoretical error bounds established in the paper.
\
\end{abstract}
\noindent {\bf Keywords:} Transport equation, singularly perturbed, Shishkin mesh,  fluid particle interaction.

\noindent {\bf AMS subject classifications:}  65M15, 65M12

\section{Introduction}

Mathematical models of flow related physical problems (e.g. semiconductor devices \cite[Chapter 3]{markowich}, plasma sheath formation
\cite{slemrod}  or fluid-particle interaction \cite{baranger,Lagoutiere,astro}) typically involve coupled systems of three of more partial differential equations. Transport equations of the form $u_t + \nabla f(u) = g(x,t)$ often appear in one or more of the equations within these systems. In many cases, the flow is convection-dominated and  steep  gradients appear in certain subregions of the domain. These problems are singularly perturbed problems. Our interest lies in designing parameter-uniform numerical methods \cite{fhmos} for a wide class of singularly perturbed problems.
These numerical methods are designed to generate globally pointwise accurate approximations to the solutions and the order of convergence is retained irrespective of the value of the singular perturbation parameter. In this article, we study singularly perturbed problems arising in linear transport problems of the form $u_t+au_x+bu=f$.

In \cite{shish}, Shishkin examined singularly perturbed transport  problems of the form: Find $u(x,t)\in C^2(\bar \Omega), \Omega :=(0,L] \times (0,T]$ such that
\begin{align*}
&u_t + \ve a u_x +bu = f(x,t), (x,t) \in \Omega;  \\
& u(x,t) \text{ given for }  (x,t) \in \partial \Omega := \bar \Omega \setminus \Omega,\quad
a >0,  \, b\ge 0, \,   (x,t) \in \bar \Omega.
\end{align*}
%The problem data $a,b, f, \psi , \phi$ are assumed to be sufficiently smooth and compatiblity conditions are imposed at the point  $(0,0)$ so that the solution $u \in C^2(\bar \Omega)$.
The parameter $0 <\ve \leq 1$ can be arbitrarily small in size.
As $\ve \rightarrow 0$, the characteristic curves (associated with this first order problem) tend towards vertical lines of the form $x=x_0$.  A boundary layer of order $O(\ve)$ will appear near the edge $x=0$. Using an appropriate piecewise uniform mesh and a standard finite difference operator, Shishkin
\cite{shish}  establishes a first order parameter-uniform \cite{fhmos} numerical method for this transport problem.

In the context of  two-parameter singularly perturbed elliptic problems \cite[\S 3 and Remark 8.3]{maria}, first order problems of the form
\[
\mu \vec a \cdot \nabla u -bu = f(x,y), \quad (x,y) \in [0,1)^2, \ u(x,0), u(0,y) \quad \hbox{given};
\]
were examined. Boundary layers of order $O(\mu)$ appear near the edges $x=0, y=0$. (If $u(x,1), u(1,y)$  are given instead of $u(x,0), u(0,y)$, the layers will form along $x=1$ and $y=1$.)
Using an appropriate Shishkin mesh \cite{maria}, a parameter-uniform error bound of the form $(N^{-1} \ln N)$ can be established.

In this paper, we examine linear singularly perturbed transport problems of the form
\begin{align}\label{transport}
 u_t + a u_x +bu = f(x,t;\ve) ;\quad
u(0,t) = \psi (t; \ve) ; \, u(x,0) = \phi (x;\ve),
\end{align}
where layers (boundary and/or interior) are generated by the fact that layers are present in one or more of the functions $\psi , \phi , f$. In contrast to the problems examined in  \cite{maria} and \cite{shish}, the characteristic curves associated with
(\ref{transport}) do not depend on the singular perturbation parameter. % (as in \cite{maria}).
Moreover, under the assumption $a(x,t) \geq \alpha >0, \forall (x,t) $ the tangents to the characteristic curves always have a finite slope.

 The paper is structured as follows. In \S \ref{sec:non-singularly}  a comparison principle is given and the regularity of the solution is discussed for non-singularly perturbed transport problems. A result of convergence for a classical scheme is established when the solution  $u$ of problem~\eqref{transport} satisfies $u\in C^1(\bar \Omega)$.
 %(and in the Appendix when $u\in C^0(\bar{\Omega})\backslash C^1(\bar{\Omega})$).
 In \S \ref{sec:FlowTowards}, \S \ref{sec:pulse} and \S \ref{sec:FlowAway} three problem classes of singularly perturbed problems are considered. The boundary or interior layers are generated due to the forcing term, the initial or boundary conditions depending on the singular perturbation parameter. The interior layers are located along the characteristic curves associated with problem \eqref{transport} and the numerical methods will need to align the mesh to this characteristic curve in order to accurately track the interior layer~\cite{shish04}, which are described in  \S \ref{sec:pulse} and  the Appendix. We construct and analyse  parameter-uniform numerical methods for all the singularly perturbed transport  problems considered of the form given in (\ref{transport}). In \S \ref{sec:numerical}, we examine a problem motivated by a mathematical model of fluid-particle interaction in particle-laden flows \cite{astro}; the solution is decomposed into several components which are approximated using the algorithms described in the three previous sections. The numerical results indicate that our method generates almost first-order global approximations to the solution of this model. Finally, we draw some conclusions.

{\bf Notation:} Throughout the paper, $C$ denotes a generic constant that is independent of the singular perturbation parameter $\ve$ and all the discretization parameters.
The $L_\infty$ norm on a domain $D$ will be denoted by $\Vert \cdot \Vert _D$. If the norm is not subscripted, then $\Vert \cdot \Vert  = \Vert \cdot \Vert _{\bar \Omega}$.

\section{Non-singularly perturbed problem} \label{sec:non-singularly}

Consider first order differential  operators of  the form
\[
L \omega:=\omega_t+a(x,t) \omega_x  +b(x,t)\omega,
\]
and  the domain $\Omega :=(0,L] \times (0,T]$.
\begin{lemma}\label{minprin}
 Assume $\omega(x,0) \geq 0, \ 0 \leq x \leq L,  a(x,t)\geq 0, (x,t) \in \bar \Omega, a,b \in C^0(\bar \Omega)$ and $\omega \in C^1(\Omega ) \cap C^0(\bar \Omega)$.
 If $\omega(0,t) \geq 0$ or $\omega_x(0^+,t) <0$  for $ t\in (0 ,T]$ and $L\omega \geq 0$ in $\Omega $, then $\omega(x,t)  \geq 0, \forall (x,t) \in \bar \Omega$.

\end{lemma}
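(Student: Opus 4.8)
The plan is to argue by contradiction using a minimum principle, after first normalising the zeroth-order term. The hypothesis $b\ge 0$ is not by itself strong enough: at a negative interior minimum the term $b\omega$ is only $\le 0$, not strictly negative, so the differential inequality alone need not yield a contradiction. To fix this I would first substitute $\omega = e^{t}v$. A direct computation gives $L\omega = e^{t}\bigl(v_t + a v_x + (b+1)v\bigr)$, so $L\omega\ge 0$ is equivalent to $\tilde L v := v_t+av_x+(b+1)v\ge 0$, and the sign conditions on $v(x,0)$, $v(0,t)$ and $v_x(0^+,t)$ are identical to those on $\omega$ because $e^{t}>0$. Since $b+1\ge 1>0$, this reduces the statement to the case of an operator whose zeroth-order coefficient is strictly positive; I rename $v$ as $\omega$ and assume $b\ge 1$ from now on.

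Next, since $\bar\Omega=[0,L]\times[0,T]$ is compact and $\omega\in C^0(\bar\Omega)$, the minimum $m:=\min_{\bar\Omega}\omega$ is attained, say at $(x^\ast,t^\ast)$. Suppose for contradiction that $m<0$, and split into cases according to the location of $(x^\ast,t^\ast)$. If $t^\ast=0$ the initial condition $\omega(x,0)\ge 0$ is violated. If $x^\ast=0$ and $t^\ast>0$, then $\omega(0,t^\ast)=m<0$ rules out the branch $\omega(0,t^\ast)\ge0$, so the hypothesis forces $\omega_x(0^+,t^\ast)<0$; but $(0,t^\ast)$ being a global minimum gives $\omega(x,t^\ast)\ge\omega(0,t^\ast)$ for small $x>0$, whence the one-sided difference quotients are $\ge 0$ and $\omega_x(0^+,t^\ast)\ge0$, a contradiction.

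The remaining case is $(x^\ast,t^\ast)\in\Omega$, i.e. $x^\ast>0$ and $t^\ast>0$, where $\omega$ is $C^1$ and the differential inequality applies. Here the usual first-order conditions for a minimum give $\omega_t(x^\ast,t^\ast)\le0$ (with equality if $t^\ast<T$, and $\le 0$ read off the left difference quotient if $t^\ast=T$), while in $x$ we have $\omega_x(x^\ast,t^\ast)=0$ when $0<x^\ast<L$ and $\omega_x(L,t^\ast)\le0$ when $x^\ast=L$; since $a\ge0$ this yields $a\omega_x\le0$ in either subcase. Finally $b(x^\ast,t^\ast)\,\omega(x^\ast,t^\ast)=b\,m<0$ because $b\ge1$ and $m<0$. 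Adding the three terms gives $L\omega(x^\ast,t^\ast)<0$, contradicting $L\omega\ge0$ on $\Omega$. Hence $m\ge0$ and $\omega\ge0$ throughout $\bar\Omega$.

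I expect the only genuinely delicate point to be the boundary edge $x=0$: this is exactly where the nonstandard alternative $\omega_x(0^+,t)<0$ is needed, and one must be careful that it is a one-sided derivative at a point of $\partial\Omega$ where $\omega$ is not assumed $C^1$. The edges $x=L$ and $t=T$ cause no trouble since they belong to $\Omega$, so $\omega$ is $C^1$ there and the sign of the one-sided extremum conditions is automatically compatible with $a\ge0$. An alternative, more constructive route is to integrate $L\omega\ge0$ along the characteristics $\dot x=a(x,t)$, on which $\tfrac{d}{dt}\bigl(\omega\exp\int b\bigr)\ge0$, so the sign of $\omega$ propagates forward from the inflow data; but tracing characteristics back to either $t=0$ or $x=0$ and treating the $\omega_x(0^+,t)<0$ case makes the contradiction argument above cleaner to present.
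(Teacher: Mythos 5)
Your argument is, in structure, exactly the paper's proof: rescale by an exponential in $t$ to make the zeroth-order coefficient strictly positive, locate the minimum of the rescaled function, rule out $t^\ast=0$ and $x^\ast=0$ using the hypotheses, and derive a contradiction from the signs of the three terms of the operator at the minimum. Your case analysis is correct and in fact more explicit than the paper's: the treatment of the edge $x=0$ via the alternative $\omega_x(0^+,t)<0$, and the observation that the edges $x=L$ and $t=T$ belong to $\Omega=(0,L]\times(0,T]$ (so the differential inequality is available there and the one-sided extremum conditions combine correctly with $a\ge 0$), are precisely the points the paper compresses into ``$q\neq 0$, $p\neq 0$'' and ``$\nu_x(p^-,q)\le 0$, $\nu_t(p,q^-)\le 0$''.

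The one genuine flaw is your opening premise: Lemma \ref{minprin} does \emph{not} assume $b\ge 0$; it only assumes $a,b\in C^0(\bar\Omega)$. (The sign condition $b\ge 0$ is a hypothesis of problem (\ref{main-problem}), not of the lemma, and the paper's remark that $b\ge 0$ entails no loss of generality for that problem relies on this very lemma holding for general $b$.) Consequently your normalisation $\omega=e^t v$, which produces the zeroth-order coefficient $b+1$, does not achieve positivity when $b\le -1$ somewhere, and your final step --- the strict inequality coming from $(b+1)m<0$ at a negative minimum $m$ --- fails there, since $(b+1)m\ge 0$ when $b+1\le 0$ and $m<0$. The repair is exactly the paper's choice of rate: set $\nu=\omega e^{-\beta t}$ with $\beta>\Vert b\Vert_{\bar\Omega}$, which is finite because $b$ is continuous on the compact set $\bar\Omega$; then the transformed coefficient $b+\beta$ is strictly positive whatever the sign of $b$, and the rest of your argument goes through verbatim.
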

\begin{proof} The proof is by contradiction. Assume $\omega < 0$ within $\bar \Omega$.  Let $\nu(x,t) = \omega(x,t) e^{-\beta t}, \beta >  \Vert b \Vert $ and
$
\nu(p,q) := \min _{\bar \Omega} \nu(x,t) < 0.
$
By our assumptions, $q \neq 0,  p \neq 0$. Hence $(p,q) \in \Omega $ where $\nu_x(p^-,q) \leq 0, \nu_t(p,q^-) \leq 0$. However, then
\[
L\omega (p,q) = e^{\beta  q}(\nu_t+a\nu_x+(b+\beta )  \nu)(p,q) <0,
\]
which is a contradiction.
\end{proof}

Consider the  initial-boundary value problem: Find $u$ such that
\begin{subequations}\label{main-problem}
\begin{align}
&Lu= f(x,t), \ (x,t) \in \Omega, \quad u(0,t) = \psi (t),  \  u(x,0)= \phi (x);  \\
& a(x,t) \geq \alpha > 0, \ b(x,t) \geq 0, \ (x,t) \in \bar \Omega ;\\
& \psi  \in C^2([0,T]), \phi  \in C^2([0,L]), \quad a,b,f\in C^2(\bar \Omega).
\end{align}
\end{subequations}
There is no loss in generality in assuming that $b \geq 0$, as we simply use the transformation $u^T = u e^{-\beta t}, \beta >  \Vert b \Vert $
and solve $Lu^T + \beta u^T = e^{\beta t}f$ for $u^T$.
From Bobisud \cite[Appendix]{bobisud},  in order for $u \in C^2(\bar \Omega)$, we require the following compatibility conditions
on the data:
\begin{subequations}\label{compat}
 \begin{align}
&\phi (0) = \psi (0); \label{comp0}\\
&\psi '(0) +a(0,0) \phi ' (0) +b(0,0)  \psi (0) = f(0,0); \label{comp1}\\
&\bigl[ f_t(0,0) - b_t(0,0) \psi (0) -b(0,0) \psi '(0) - \psi ''(0) \bigr]  \nonumber  \\
&\hspace{1cm}+\phi '(0)\bigl[ a(0,0)(b(0,0)  +a_x(0,0)) -a_t(0,0) \bigr] +a^2(0,0) \phi ''(0) \nonumber \\
& \hspace{1cm}= a(0,0)\bigl[ f_x(0,0) - b_x(0,0) \psi (0) \bigr]. \label{comp2}
\end{align}
\end{subequations}
As in \cite{bobisud}, define the characteristic curve passing through the point $(s, \tau)$ by $x=g(t;s,\tau )$, where  $x=g(t)$ is the solution of the initial value problem
\[
\frac{dx}{dt} =a(x,t), \qquad x(\tau)= s.
\]
Define  $\Gamma (s,\tau ) $ such that
$
g(\Gamma (s,\tau);s,\tau )=0.$
The point $(x,t)=(0, \Gamma(s,\tau))$ is where the characteristic curve through the point $(s,\tau)$ intercepts the axis $x=0$.
Define the two subregions $D_1:= \{ (x,t) \, \vert  \, x >  g(t;0,0) \} , D_2:= \{ (x,t) \, \vert \, x< g(t;0,0) \}$.
The exact solution of problem (\ref{main-problem}) is then given by  \cite{bobisud}
\begin{subequations}\label{exact}
\begin{align}
 u(s,\tau ) &=
\phi (g(0;s,\tau)) B(0,\tau ; s, \tau) + \nonumber \\
& + \int _{z=0}^{\tau } f(g(z; s,\tau),z)B(\tau,z ; s, \tau)  \ dz, \qquad (s,\tau ) \in D_1;  \\
  u(s,\tau ) &=
\psi (\Gamma (s,\tau ))  B(\Gamma (s,\tau ) , \tau ; s, \tau) +\nonumber \\
&+ \int _{z=\Gamma (s,\tau ) }^{\tau } f(g(z; s,\tau),z) B(\tau,z ; s, \tau)   \ dz, \  (s,\tau ) \in D_2;
\end{align}
\end{subequations}
where
\[
B(\eta ,\iota;s, \tau) := e^{- \int _{t=\eta}^ \iota b(g(t;s,\tau),t)  \ dt }.
\]
If (\ref{comp0}) is not satisfied, then the solution  of problem (\ref{main-problem}) is discontinuous across the  characteristic curve $ x= g(t;0,0)$. If (\ref{comp1})  is not satisfied, then the solution $u$  of problem (\ref{main-problem}), (\ref{comp0}) is such that $u \in  C^0(\bar \Omega) \setminus C^1(\bar \Omega)$. Finally, if (\ref{comp2})  is not satisfied, then the solution  $u$ of problem (\ref{main-problem}),  (\ref{comp0}), (\ref{comp1}) is such that $u \in  C^1(\bar \Omega) \setminus C^2(\bar \Omega)$. We associate the following four functions $\{ \chi _i \} _{i=0}^3$, with the compatibility conditions (\ref{compat}):
\begin{subequations}\label{singular}
\begin{align}
L\chi _0(x,t) & =0, \qquad  L\chi _1(x,t) =1, \quad (x,t) \in \Omega, \\
L\chi _2(x,t) &=x,\qquad L\chi _3(x,t) =t,\quad (x,t) \in \Omega, \\
 \chi _0 (0,t) &=0, \ t >0,\quad  \ \chi _0(x,0) =1, \  x\geq 0,  \\
 \chi _i (0,t) & =0, \ t >0, \quad \chi _i(x,0) =0, \ x\geq 0, \ i=1, 2,3;
\end{align}
\end{subequations}
where $\chi _0 \not \in C^0(\bar \Omega), \chi _1 \in  C^0(\bar \Omega) \setminus C^1(\bar \Omega)$ and $\chi _2, \chi _3 \in  C^1(\bar \Omega) \setminus C^2(\bar \Omega)$.
Explicit representations for $\chi _0$ and $\chi _1$ can be  determined using (\ref{exact}).  In general, for the solution $u$ of problem (\ref{main-problem}) we note that
\begin{align*}
&u(x,t) - \bigl (\psi (0) + (\phi (0) -\psi (0) ) \chi _0(x,t) \bigr) \in C^0(\bar \Omega) \setminus C^1(\bar \Omega); \\
&u(x,t) - \bigl (\psi (0) + (\phi (0) -\psi (0) ) \chi _0(x,t) \bigr)  - A\chi _1(x,t) \in C^1(\bar \Omega) \setminus C^2(\bar \Omega); \\
&\hbox{where} \qquad A:= f(0,0) - \bigl( \psi '(0) +a(0,0) \phi ' (0) +b(0,0)  \psi (0) \bigr).
\end{align*}

After  separating off the singular terms $\chi _0$ and  $\chi _1$ we can use a simple numerical method to generate an approximation to the solution of a problem of the form (\ref{main-problem}),  (\ref{comp0}), (\ref{comp1}). Select a set of mesh points \[
\bar \Omega ^{N,M} := \left \{ (x_i,t_j) \, \vert \, x_0=0, \, t_0=0, \, x _N=L, \, t_M=T \right \}_{i,j=0}^{N,M},
\]
where   the mesh steps are denoted by $h_i:=x_i-x_{i-1}, 1\leq i \leq N; \, k_j:= t_j-t_{j-1}, 1 \leq j \leq M$ and $\max _i h_i \leq CN^{-1}, \max _j k_j\leq CM^{-1}$. Define the  set of interior mesh points as $\Omega ^{N,M} := \{ (x_i,t_j)\}_{i,j=1}^{N,M}$. We can discretize problem  (\ref{main-problem}) using an upwinded finite difference operator\footnote{We use the following notation for the finite difference operators:
\[
D^-_t Y (x_i,t_j) := \ds\frac{Y(x_i,t_j)-Y(x_i,t_{j-1})}{k_j}, \quad
D^-_x Y(x_i,t_j) :=\ds\frac{Y(x_i,t_j)-Y (x_{i-1},t_j)}{h_i}. \]
} of the form:
\begin{subequations}\label{discrete-main-problem}
\begin{align}
L^{N,M} U(x_i,t_j) &= f(x_i,t_j), \quad (x_i,t_j) \in \Omega ^{N,M},\\
U(0,t_j) & = u(0,t_j), \, t _j >0, \quad U(x_i,0) = u(x_i,0), \, x_i \geq 0;
\\
\hbox{where }  L^{N,M} U(x_i,t_j) & := (D^-_t+a(x_i,t_j) D^-_x + b(x_i,t_j)I)U(x_i,t_j) .
\end{align}
\end{subequations}
 Similarly to the continuous problem, the discrete operator $L^{N,M}$ satisfies a discrete comparison principle.

We form a global approximation $\bar U$ to the solution of (\ref{main-problem}),  (\ref{comp0}), using
\[
\bar U(x,t) : = \sum _{i=0,j=1}^{N,M} U(x_i,t_j) \varphi _i(x)  \eta_j(t),
\]
where $\varphi _i(x)$ is the standard hat function centered at $x=x_i$ and $ \eta _j(t) :=(t-t_{j-1})/k, \, t \in [t_{j-1},t_j), \,  \eta _j(t) :=0, \, t \not \in [t_{j-1},t_j)$.
\begin{theorem}\label{Result0}
If $U$ is the solution of (\ref{discrete-main-problem}) and $u \in C^1(\bar \Omega)$ is the solution of (\ref{main-problem}) then
\[
\Vert \bar U -u \Vert  \leq  C (N^{-1}+ M^{-1}).
\]
\end{theorem}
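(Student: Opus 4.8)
The plan is to combine the discrete comparison principle quoted after (\ref{discrete-main-problem}) with a consistency (truncation) estimate and a barrier function, and then to pass from the nodal error to the global error via an interpolation bound. Write $e(x_i,t_j):=U(x_i,t_j)-u(x_i,t_j)$ for the nodal error. Since $Lu=f=L^{N,M}U$ at interior mesh points and $e\equiv 0$ on the initial and boundary mesh points, $e$ satisfies $L^{N,M}e(x_i,t_j)=(L^{N,M}-L)u(x_i,t_j)$ with homogeneous data, where the right-hand side is the truncation error $\tau_{ij}=a(x_i,t_j)(D^-_x u-u_x)(x_i,t_j)+(D^-_t u-u_t)(x_i,t_j)$.

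First I would bound $\tau_{ij}$. Writing $D^-_x u(x_i,t_j)-u_x(x_i,t_j)=\frac{1}{h_i}\int_{x_{i-1}}^{x_i}\bigl(u_x(s,t_j)-u_x(x_i,t_j)\bigr)\,ds$ and similarly in time, the consistency error is controlled by the oscillation of $u_x$ over a space cell and of $u_t$ over a time cell. This is where I expect the main obstacle to lie: a first-order bound $|\tau_{ij}|\le C(h_i+k_j)$ requires $u_x,u_t$ to be Lipschitz (equivalently $u_{xx},u_{tt}$ bounded), whereas the hypothesis only asserts $u\in C^1(\bar\Omega)$, for which the oscillation is merely $o(1)$ and no algebraic rate follows. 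The resolution is to invoke the regularity coming from the representation (\ref{exact}): the $C^1$ solutions in play fail at most the third compatibility condition (\ref{comp2}), so after subtracting the smooth data-interpolant the singular content is carried by terms of the form $A_2\chi_2+A_3\chi_3$, with $\chi_2,\chi_3\in C^1(\bar\Omega)\setminus C^2(\bar\Omega)$ smooth on each side of the characteristic $x=g(t;0,0)$ and with second derivatives that, though discontinuous across that curve, remain bounded. Hence $u_x,u_t$ are globally Lipschitz, $\|u_{xx}\|,\|u_{tt}\|\le C$, and $|\tau_{ij}|\le C(h_i\|u_{xx}\|+k_j\|u_{tt}\|)\le C(N^{-1}+M^{-1})$.

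Next I would run a barrier argument using the discrete comparison principle. Taking the mesh function $\theta(x_i,t_j):=t_j$, one checks $L^{N,M}\theta=1+b(x_i,t_j)\,t_j\ge 1$ because $b\ge 0$. Setting $\Psi_{ij}:=C(N^{-1}+M^{-1})\,t_j$, with $C$ the constant from the truncation bound, gives $L^{N,M}(\Psi\pm e)=C(N^{-1}+M^{-1})(1+b\,t_j)\pm\tau_{ij}\ge 0$ at interior points, while $\Psi\pm e\ge 0$ on the initial and boundary mesh. The comparison principle then yields $|e(x_i,t_j)|\le\Psi_{ij}\le CT(N^{-1}+M^{-1})$, a first-order nodal error bound.

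Finally I would convert the nodal bound into the stated global bound by splitting $\bar U-u=(\bar U-\bar u)+(\bar u-u)$, where $\bar u$ is the corresponding interpolant built from the exact nodal values $u(x_i,t_j)$. The first term is the interpolant of the nodal error; since the basis functions $\varphi_i(x)\eta_j(t)$ are non-negative and, for each fixed $(x,t)$, sum to a value at most one, one has $\|\bar U-\bar u\|\le\max_{i,j}|e(x_i,t_j)|\le C(N^{-1}+M^{-1})$. The second term measures the distance of the exact solution from this piecewise-linear reconstruction of its own nodal values; since $u\in C^1(\bar\Omega)$ has bounded first derivatives, $u$ deviates from it by at most $C(h_i\|u_x\|+k_j\|u_t\|)$ on each cell, so this term is also $O(N^{-1}+M^{-1})$. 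Adding the two contributions gives $\|\bar U-u\|\le C(N^{-1}+M^{-1})$. The delicate point is the second-derivative boundedness invoked in the consistency step; the comparison-principle, barrier and interpolation steps are then routine.
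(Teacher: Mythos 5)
Your proposal is correct and follows essentially the same route as the paper: both reduce the problem to showing that $u_x,u_t$, though only $C^0$ across the characteristic $x=g(t;0,0)$, have bounded (piecewise continuous) derivatives $u_{xx},u_{tt}$ off that curve — which the paper exploits by splitting the truncation-error integrals at the crossing point, and you exploit equivalently via the Lipschitz continuity of $u_x,u_t$ — and then both conclude with the discrete comparison principle (your explicit barrier $C(N^{-1}+M^{-1})t_j$ being the standard realization of the paper's implicit step) and standard interpolation estimates for the global bound.
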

\begin{proof}
 As $\frac{d x}{dt} >0$, then at each time level $t=t_*$, there is only one point $(x_*,t_*)$
where the characteristic curve $x= g(t;0,0)$ cuts the line $t=t_*$. From the explicit expression in (\ref{exact}), we see that the second  order derivatives of $\chi _2$
and $\chi _3$ are continuous at all points except for the points
on the curve $x=g(t_*;0,0)$. Moreover, the second derivatives are bounded along this characteristic curve  $\Upsilon=\{(x_*,t_*)=(g(t_*;0,0),t_*), \ 0 \le t_* \le T\}$. That is,
\[
\vert  u_{xx}  (x^\pm _*,t_*)  \vert  \leq C, \quad  \Vert u_{xx}  \Vert _{\Omega \setminus { \Upsilon}} \leq C,  \quad
\vert u_{tt}    (x_*,t^\pm _*)  \vert  \leq C, \quad \Vert u_{tt}   \Vert _{\Omega \setminus { \Upsilon}} \leq C.
\]
For each $t=t_j$, there exists a unique subinterval $[x_{I-1},x_I]$ such that $ x_*^I\in (x_{I-1},x_I]$ and $ x_*^I=g(t_j)$.
For this particular interval,
\begin{align*}
\left (\frac{\partial} {\partial x} - D_x^- \right) u(x_i,t_j)&= \frac{1}{h_i} \int _{s=x_{i-1}}^{x_i} u_x(x_i,t_j) - u_x(s,t_j) \ ds \\
 &= \frac{1}{h_i} \int _{s=x_{i-1}}^{x_i} \left(  \int _{r=s}^{ x_*^I}  u_{xx}(r,t_j) \ dr  +  \int _{r= x_*^I}^{x_i}u_{xx}(r,t_j) \ dr\right) ds.
\end{align*}
In an analogous fashion, for each $x=x_i$, there exists a unique subinterval $[t_{J-1},t_J]$ such that $ t_*^J \in(t_{J-1},t_J]$ and $x_i=g( t_*^J )$.
For this particular interval,
\begin{align*}
\left (\frac{\partial} {\partial t} - D_t^-\right) u(x_i,t_j)%&=& \frac{1}{k_j} \int _{s=t_{j-1}}^{t_j} u_t(x_i,t_j) - u_t(x_i,s) \ ds \\
 &=& \frac{1}{k_j} \int _{s=t_{j-1}}^{t_j} \left(  \int _{r=s}^{{ t_*^J}}  u_{tt}(x_i,r) \ dr  +  \int _{r={ t_*^J}}^{t_j}u_{tt}(x_i,r) \ dr\right) ds.
\end{align*}
 Hence, inside of these intervals we have
\[
\left \vert \left (\frac{\partial} {\partial x} - D_x^- \right) u(x_i,t_j) \right \vert \le C N^{-1},
\quad
\left \vert \left (\frac{\partial} {\partial t} - D_t^-\right) u(x_i,t_j) \right \vert \le C M^{-1}.
\]
On the other hand, truncation error estimates outside of these particular subintervals can be easily obtained using that $u_{xx}$ and $u_{tt}$ are continuous functions. Therefore, we have
\[
\bigl \vert  L^{N,M}(u-U)(x_i,t_j) \bigr \vert \leq CN^{-1} +CM^{-1}, \qquad (x_i,t_j) \in \Omega ^{N,M}.
\]
As the upwind operator $L^{N,M}$ satisfies a discrete  comparison principle, it follows that
\[
\bigl \vert  (u-U)(x_i,t_j) \bigr \vert \leq CN^{-1} +CM^{-1}.
\]
This nodal error bound extends to a global error bound using standard interpolation error estimates.
\end{proof}
\begin{remark} If instead of separating away the term involving $\chi _1 (x,t)$,  one could  use the numerical method to generate an approximation, when $u \in C^0(\bar \Omega ) \setminus C^1(\bar \Omega ) $.  In this case, unlike Theorem \ref{Result1}, the computed order of convergence in our numerical experiments reduce to $0.5$.
\end{remark}

In the remaining sections, we construct numerical methods in the case where the problem (\ref{main-problem}) is singularly perturbed.

\section{\bf Flow towards an attractive  force} \label{sec:FlowTowards}

In the first problem class to be examined in this paper, the forcing term has a layer at the outflow ($x=L$)  of the domain.
Consider problem (\ref{main-problem}), (\ref{compat})  with the additional conditions

\begin{subequations}\label{flow-towards}
\begin{align}
 \left \vert \frac{\partial ^{i+j}}{\partial x ^i  \partial t^j } f(x,t;\ve) \right \vert & \leq  \frac{C_1}{\ve ^{1+i}} e^{\frac{x-L}{\ve}}; \quad 0 \leq i +j \leq 2, \\
 \frac{\partial ^{i+j}f}{\partial x ^i  \partial t^j } (0,0) & =0,  \quad   0 \leq i +j \leq 1 .
\end{align}
\end{subequations}
Then, the solution $u$ can be decomposed into the sum of a regular component $v \in C^2(\bar \Omega)$ and a layer component $w\in C^2(\bar\Omega)$ defined by
\begin{subequations}\label{decomp}
\begin{align}
Lv = 0, \quad (x,t)  \in \Omega; \quad v=u,\ (x,t) \in \partial \Omega := \bar \Omega \setminus \Omega ;\label{towards-reg}\\
Lw = f, \quad (x,t)  \in\Omega;\quad w=0,\ (x,t) \in \partial \Omega.\label{towards-layer}
%\\ f_1(x,t):=f(0,0)+xf_x(0,0) +tf_t(0,0).
\end{align}
\end{subequations}
In this section the  problem data $a,b, \psi , \phi $,  are smooth functions that do not depend on the singular perturbation parameter $\ve$.
By Lemma \ref{minprin},
\[
\vert u(x,t) \vert \leq \Vert \phi \Vert _{[0,T]} + \Vert \psi \Vert _{[0,L]}+ \frac{C_1}{\alpha} e^{\frac{ x-L}{\ve}}.
\]
\begin{lemma}\label{Lemma2} For all $(x,t) \in \bar \Omega$, the components in (\ref{decomp}) satisfy
\begin{subequations}\label{decomp-bounds}\begin{align}
\left\Vert \frac{\partial ^{i+j} v }{\partial x^i \partial t^j} \right \Vert & \leq C, \quad 0 \leq i+j \leq 2; \\
\left\vert w (x,t) \right \vert & \leq C e^{\frac{ x-L}{\ve}} \left(1-e^{\frac{ -\Vert a \Vert t}{\ve}} \right); \\
\left\vert \frac{\partial ^{i+j} w (x,t)}{\partial x^i \partial t^j } \right \vert & \leq C \ve ^{-(i+j)}e^{\frac{ x-L}{\ve}},\quad  1 \leq i+j \leq 2.
\end{align}\end{subequations}
\end{lemma}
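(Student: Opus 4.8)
The plan is to obtain every bound from the explicit representation (\ref{exact}) together with the comparison principle of Lemma~\ref{minprin}, applied through suitable barrier functions. The bounds on the regular component $v$ are the easiest: since $a,b,\psi,\phi$ are independent of $\ve$ and $Lv=0$, the function $v$ is itself $\ve$-independent. Moreover, the conditions $f(0,0)=f_x(0,0)=f_t(0,0)=0$ are exactly what make the compatibility conditions (\ref{compat}) hold for both the problem (\ref{towards-reg}) (with the data of $u$) and the problem (\ref{towards-layer}) (with zero data), so $v,w\in C^2(\bar\Omega)$; in particular $\Vert\partial_x^i\partial_t^j v\Vert\le C$ for $0\le i+j\le2$ with $C$ independent of $\ve$.

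For the pointwise bound on $w$ I would take the barrier
\[
\Phi(x,t):=\frac{C_1}{\alpha}\,e^{\frac{x-L}{\ve}}\Bigl(1-e^{-\Vert a\Vert t/\ve}\Bigr).
\]
Writing $A:=\Vert a\Vert$, a direct computation gives $\Phi_t+a\Phi_x=\tfrac{C_1}{\alpha\ve}e^{(x-L)/\ve}\bigl[a+(A-a)e^{-At/\ve}\bigr]\ge\tfrac{C_1}{\ve}e^{(x-L)/\ve}$, since $A-a\ge0$ and $a\ge\alpha$; adding $b\Phi\ge0$ yields $L\Phi\ge\tfrac{C_1}{\ve}e^{(x-L)/\ve}\ge|f|$. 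As $\Phi(x,0)=0=w(x,0)$, $\Phi(0,t)\ge0=w(0,t)$ and $L(\Phi\pm w)=L\Phi\pm f\ge0$, Lemma~\ref{minprin} applied to $\Phi\pm w$ gives $|w|\le\Phi$, which is the stated bound.

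The derivative bounds follow by differentiating $Lw=f$ and bootstrapping in the order $w\to(w_x,w_t)\to(w_{xx},w_{xt},w_{tt})$. For instance $w_x$ and $w_t$ satisfy $L(w_x)+a_xw_x=f_x-b_xw$ and $L(w_t)=f_t-a_tw_x-b_tw$; inserting the already-proved bounds and $|\partial_x^i\partial_t^jf|\le C_1\ve^{-(1+i)}e^{(x-L)/\ve}$ bounds their right-hand sides by $C\ve^{-2}e^{(x-L)/\ve}$ and $C\ve^{-1}e^{(x-L)/\ve}$ respectively. The initial data are controlled because $w(x,0)\equiv0$ gives $w_x(x,0)=0$ and the equation gives $w_t(x,0)=f(x,0)$, all consistent with barriers of the form $\Phi_{ij}:=C\ve^{-(i+j)}e^{(x-L)/\ve}$, for which $L\Phi_{ij}\ge C\ve^{-(i+j)-1}e^{(x-L)/\ve}$ dominates the right-hand sides. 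Applying Lemma~\ref{minprin} to $\Phi_{ij}\pm\partial_x^i\partial_t^jw$ then delivers the final bound of (\ref{decomp-bounds}).

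The main obstacle is the inflow-boundary data at $x=0$, which are not zero for the $x$-derivatives. These are obtained by evaluating the differentiated equations along $x=0$ and using $w(0,t)\equiv0$ (so $w_t(0,t)=w_{tt}(0,t)=0$) to solve algebraically for the $x$-derivatives in terms of $f$ and its derivatives at $x=0$; since $|\partial_x^i\partial_t^jf(0,t)|\le C_1\ve^{-(1+i)}e^{-L/\ve}$ is exponentially small, every derivative of $w$ of order $\le2$ is $O(\ve^{-p}e^{-L/\ve})$ there and is therefore dominated by $\Phi_{ij}(0,t)=C\ve^{-(i+j)}e^{-L/\ve}$. A secondary point is that the equations for $w_x$ and $w_{xx}$ carry the non-sign-definite zeroth-order coefficients $b+a_x$ and $b+2a_x$; as in the proof of Lemma~\ref{minprin}, this is harmless after the rescaling $\nu=\omega e^{-\beta t}$ with $\beta$ large.
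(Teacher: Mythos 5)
Your treatment of $v$, of the pointwise bound on $w$, and of the first-order derivative bounds coincides with the paper's argument: you write out explicitly the ``obvious barrier function'' $\frac{C_1}{\alpha}e^{(x-L)/\ve}\bigl(1-e^{-\Vert a\Vert t/\ve}\bigr)$ invoked in the paper, you use the same differentiated problem (\ref{w_x}) for $w_x$ with the exponentially small inflow data $w_x(0,t)=f(0,t)/a(0,t)$, the same exponential rescaling to absorb the non-sign-definite coefficient $b+a_x$, and you recover $w_t$ from the equation. Up to that point the proposal is correct.

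The gap is in the second-order bounds. To apply Lemma~\ref{minprin} to $\Phi_{ij}\pm\partial_x^i\partial_t^j w$ with $i+j=2$ you need $\partial_x^i\partial_t^j w\in C^1(\Omega)\cap C^0(\bar\Omega)$, and you need the twice-differentiated equation for $w$ to hold classically; both require third-order derivatives of $w$. Under the stated hypotheses ($a,b,f\in C^2(\bar\Omega)$, the bounds (\ref{flow-towards}) which stop at order two, and the second-order compatibility conditions (\ref{compat})) one only has $w\in C^2(\bar\Omega)$; first-order transport problems do not smooth their data, so $w_{xxx}$ need not exist, and the bootstrap cannot be pushed to a third application of the comparison principle. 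This is precisely where the paper changes method: it regards $w_x$ as the solution of the first-order problem (\ref{w_x}), invokes the closed-form representation (\ref{exact}) for that problem, and differentiates the representation once in $x$ --- using $g(z;x,t)\le\Vert a\Vert(z-t)+x$ so that integration along the characteristic converts the $\ve^{-3}$ bound on $f_{xx}$ into the stated $\ve^{-2}$ bound on $w_{xx}$ --- and then obtains the mixed and time derivatives algebraically from $w_{tx}=(f-bw-aw_x)_x$ and $w_{tt}=(f-bw-aw_x)_t$, with no further comparison principle. To repair your argument you would either have to strengthen the hypotheses to $C^3$ data with third-order compatibility (and extend (\ref{flow-towards}) to $i+j\le 3$), or replace the comparison-principle step for $i+j=2$ by the representation-formula computation, as the paper does.
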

\begin{proof} As the data for the problem (\ref{towards-reg}) are independent of the parameter $\ve$ and since $v \in C^2(\bar\Omega)$ the bounds on the derivatives of $v$ follow immediately.

Using Lemma \ref{minprin} with the obvious barrier function yields the bound on $w(x,t)$.
Since $w_x \in C^1(\bar \Omega)$, it satisfies the first order problem
\begin{equation}\label{w_x}
Lw_x +a_xw_x = f_x -b_x w, \quad w_x(x,0)=0, \ w_x(0,t) = \frac{f(0,t)}{a(0,t)}.
\end{equation}
Using $w_1:= e^{-\kappa t} w_x, \ \kappa := \min \{b+a_x,0 \}$ and Lemma \ref{minprin} again, we establish the bound
\[
 \vert w_x (x,t)  \vert \leq C \ve ^{-1}e^{\frac{ x-L}{\ve}} \quad \hbox{and then} \quad \vert w_t (x,t)  \vert \leq C \ve ^{-1}e^{\frac{ x-L}{\ve}}.
\]

Using (\ref{exact}) a closed form  representation of
the solution of (\ref{w_x}) exists.
%\begin{eqnarray*}w(x,t) = \Bigl\{ \begin{array}{ll} \int _{z=0}^{t} f(g(z; x,t),t)  B(t,z; s,t) \ dz, \qquad (x,t) \in D_1  \\
%(\int _{z=0}^{t} -  \int _{z=0}^{\Gamma (x,t)}) f(g(z; x,t ),t) B(t,z; s,t)\ dz, \quad (x,t) \in D_2.\end{array}
%\end{eqnarray*}
Differentiating this  expression (w.r.t. $ x$) and using
$g(z;x,t) \leq \Vert a \Vert  (z-t)+ x$ we have that
\begin{eqnarray*}
\left \vert \frac{\partial ^2 w}{\partial x^2}(x,t) \right \vert \leq C
\begin{cases}
\int _{s=0}^{t}  \ve ^{-3}e^{\frac{\Vert a \Vert (s-t) +x-L}{\ve}} \ ds, & (x,t) \in D_1,  \\
(\int _{s=0}^{t} +  \int _{s=0}^{\Gamma (x,t)}) \left \vert \frac{\partial (f_x-b_xw) (g(s; x,t),t)}{\partial x} \right \vert \ ds, & (x,t) \in D_2.
\end{cases}
\end{eqnarray*}
Hence
\begin{eqnarray*}
\left \vert \frac{\partial ^2w}{\partial x^2}(x,t) \right \vert \leq C \ve ^{-2}e^{\frac{ x-L}{\ve }}.
\end{eqnarray*}
Use the differential equation $u_{tx} = (f -bu-au_x)_x $ to derive bounds on $u_{tx}$ and   $u_{tt} = (f -bu-au_x)_t $ to derive bounds on $u_{tt}$.
\end{proof}
The layer function can be further decomposed into
\begin{subequations}\label{extra-decomp}
\begin{align}
&w(x,t) = w_0(x,t)  + w_1(x,t), \qquad \hbox{where} \\
&L w_0=0,(x,t) \in \Omega, \quad w_0(0,t)=0,\  w_0(x,0) =\omega (x), \\
&a(x,0) \frac{d\omega}{dx} +b(x,0)\omega=f(x,0), \ 0 < x \leq L, \quad \omega (0)=0.
\end{align}
\end{subequations}
Observe that, for $0 <p<1$,
\begin{align*}
\left \vert \frac{d^{i} \omega (x)}{dx^i } \right \vert & \leq C \ve ^{-i}e^{\frac{ x-L}{\ve}}\ i =0,1,2; \
 \left \vert \frac{\partial ^{j}}{ \partial t^j }w_0 (x,t) \right \vert \leq C\ve ^{-j}e^{\frac{ x-L}{\ve}} e^{-\frac{p\alpha t}{\ve}},\  j =0,1,\\
L \left ( \frac{\partial w_1}{\partial t} \right) & =  f-a_t \frac{\partial w_1}{\partial x} -b_t w_1, \quad \frac{\partial w_1}{\partial t}(0,t) = \frac{\partial w_1}{\partial t}(x,0) = 0.
\end{align*}
Using the arguments in the previous  Lemma~\ref{Lemma2} we can establish that
\begin{equation}\label{sharper}
\left\vert \frac{\partial ^{j} w_1 (x,t)}{ \partial t^j } \right \vert\leq C \ve ^{1-j}e^{\frac{ x-L}{\ve}},\quad  j=1,2.
\end{equation}

To capture the layer near $x=L$, we define a piecewise-uniform Shishkin mesh \cite{fhmos}  in both space and time, which we denote by $\bar \Omega ^N_S$.
 The space domain
is split by $[0,L]= [0, L-\sigma ]\cup [L-\sigma ,L]$ and the time domain is subdivided into
$[0,T]= [0, \tau ]\cup [\tau ,T]$. In each coordinate direction, half of the mesh points are uniformly distributed in each subinterval. The transition points are
defined to be \[
\sigma := \min \left \{ \frac{L}{2}, \ve\ \ln {N} \right \} \quad \hbox{and} \quad
\tau := \min \left \{ \frac{T}{2},  C_2 \ve   \ln M \right \}, \ C_2 > \alpha ^{-1}.
\]
The two mesh steps  in space and time  are denoted by
\[
 H=2\frac{L-\sigma}{N}, \  h=2\frac{\sigma}{N}, \quad  K=2\frac{T-\tau}{M}, \  k=2\frac{\tau}{M}.
\]

On this piecewise-uniform mesh, we use a classical upwinded finite difference operator at all points except at the transition point where $x_i=L-\sigma$.
The discrete problem is: Find $U$ such that
\begin{subequations}\label{discrete-problem}
\begin{align}
 L_F^{N,M} U(x_i,t_j) & =f(x_i,t_j), \quad (x_i,t_j) \in \Omega ^N_S;\\
U(0,t_j)& = u(0,t_j) , \ t_j \geq 0, \quad  U(x_i,0)= u(x_i,0) ,\  0 \leq x_i \leq L,
\end{align}
where the fitted finite difference operator $L_F^{N,M}$ is defined as
\begin{align}
 &L_F^{N,M}U(x_i,t_j) \nonumber \\
 &:= \begin{cases}
 L^{N,M}U(x_i,t_j), & \text { if }  x_i \neq L-\sigma, \\
\left(L^{N,M}+a( x_i,t_j)\left( \frac{\rho}{1-e^{-\rho}} -1\right) D^-_x \right)U( x_i,t_j),
&  \text { if }  x_i = L-\sigma,
\end{cases}
%L_F^{N,M}U(x_i,t_j):= (D^-_t+\frac{a(x_i,t_j)\rho}{1-e^{-\rho}}  D^-_x + b(x_i,t_j))U(x_i,t_j) ,\ \rho := \frac{H}{\ve}   \  x_i = L-\sigma.
\end{align}
\end{subequations}
with  $\rho := H/\ve.$ A standard proof-by-contradiction argument can be used to establish the next result.

\begin{lemma} \label{minprindis}
 Let $S=(p_1, p_2]\times (q_1, q_2]$ be a subdomain of $\Omega$. For any mesh function $Z$, if $Z(x_i,q_1) \geq 0, \, p_1 \leq x_i \leq p_2,  \, Z(p_1, t_j) \geq 0, \, q_1 \leq t_j \leq q_2$ and $L_F^{N,M}Z(x_i,t_j) \geq 0, \ (x_i,t_j)   \in S $,
 then $Z(x_i,t_j) \geq 0, \ (x_i,t_j)  \in \bar S$.
\end{lemma}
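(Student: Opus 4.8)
The plan is to argue by contradiction, mirroring the proof of the continuous minimum principle in Lemma~\ref{minprin}. Suppose the conclusion fails, so that $m := \min_{(x_i,t_j) \in \bar S} Z(x_i,t_j) < 0$. Among all mesh points attaining this minimum I would select the one, call it $(x_{i^*}, t_{j^*})$, with the smallest time index $j^*$. The hypotheses $Z(x_i, q_1) \ge 0$ and $Z(p_1, t_j) \ge 0$ guarantee that $Z$ is nonnegative on the inflow boundary $\{t = q_1\} \cup \{x = p_1\}$ of $S$, so a point with negative value cannot lie there; hence $(x_{i^*}, t_{j^*})$ is an interior point with $x_{i^*} > p_1$ and $t_{j^*} > q_1$, and both backward neighbours $(x_{i^*}, t_{j^*-1})$ and $(x_{i^*-1}, t_{j^*})$ belong to $\bar S$.

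Next I would evaluate $L_F^{N,M} Z$ at $(x_{i^*}, t_{j^*})$. Because this point is a minimizer, $Z(x_{i^*}, t_{j^*}) \le Z(x_{i^*}, t_{j^*-1})$ and $Z(x_{i^*}, t_{j^*}) \le Z(x_{i^*-1}, t_{j^*})$, so the backward difference quotients satisfy $D_t^- Z \le 0$ and $D_x^- Z \le 0$ there. Using $a \ge \alpha > 0$, $b \ge 0$ and $Z(x_{i^*}, t_{j^*}) = m < 0$, every term of $L^{N,M} Z = D_t^- Z + a\,D_x^- Z + b\,Z$ is nonpositive, so $L^{N,M}Z(x_{i^*},t_{j^*}) \le 0$. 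If $x_{i^*}$ happens to be the transition point $L-\sigma$, there is the additional term $a\bigl(\tfrac{\rho}{1-e^{-\rho}} - 1\bigr) D_x^- Z$; since $1 - e^{-\rho} < \rho$ for $\rho > 0$, the factor $\tfrac{\rho}{1-e^{-\rho}} - 1$ is strictly positive, so this extra term is again nonpositive and does not change the conclusion $L_F^{N,M} Z(x_{i^*}, t_{j^*}) \le 0$.

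This already contradicts the hypothesis $L_F^{N,M} Z \ge 0$ whenever one of the terms is strictly negative, in particular when $b(x_{i^*}, t_{j^*}) > 0$. The only delicate point, and the step I expect to be the main obstacle, is the degenerate situation in which $b$ vanishes at the minimizer and the inequality $L_F^{N,M}Z \le 0$ is saturated. In that case $L_F^{N,M} Z = 0$ forces each nonpositive term to vanish, so in particular $D_t^- Z = 0$, i.e. $Z(x_{i^*}, t_{j^*-1}) = m$. Thus $(x_{i^*}, t_{j^*-1}) \in \bar S$ is another minimizer with a strictly smaller time index, contradicting the minimal choice of $j^*$ (and if $t_{j^*-1} = q_1$ this is a boundary minimizer with value $m<0$, contradicting the data directly). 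I would conclude that a negative minimum cannot occur, and hence $Z \ge 0$ throughout $\bar S$. Equivalently, one may first render the zeroth-order coefficient strictly positive through the substitution $Z = e^{\beta t}\tilde Z$ used in Lemma~\ref{minprin}, which removes the degeneracy and yields a strict contradiction at once.
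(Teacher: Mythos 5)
Your proof is correct and is exactly the ``standard proof-by-contradiction argument'' that the paper invokes for this lemma without writing it out: take a negative minimizer at the earliest time level, check that every term of $L_F^{N,M}Z$ there is nonpositive (including the fitted term at $x_i=L-\sigma$, whose coefficient $\frac{\rho}{1-e^{-\rho}}-1$ is positive, and which multiplies $D_x^-Z\leq 0$), and resolve the degenerate case $b=0$ by noting that saturation forces $D^-_tZ=0$, pushing the minimum to an earlier time level. Nothing is missing; your handling of the transition point and of the degeneracy is precisely what makes the ``standard'' argument rigorous here.
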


The discrete solution can be decomposed into a regular and singular component, $U=V+W$, where the  regular component satisfies
\begin{align*}
L_F^{N,M} V(x_i,t_j) & =0, \quad (x_i,t_j) \in \Omega ^N_S, \\
V(0,t_j)& = u(0,t_j) , \ t_j \geq 0, \quad  V(x_i,0)= u(x_i,0) ,\  0 \leq x_i \leq L,
\end{align*}
and the layer function satisfies
\begin{align*}
L_F^{N,M} W(x_i,t_j) & =f(x_i,t_j), \quad  (x_i,t_j) \in \Omega ^N_S, \\
W(0,t_j) & = 0, \ t_j \geq 0, \qquad  W(x_i,0)= 0,\ 0\leq x_i \leq L.
\end{align*}
\begin{lemma}\label{Lemma4} The discrete boundary layer component satisfies
\[
\vert W(x_i,t_j) \vert \leq C N^{-1}, \  x _i \leq L-\sigma, \ t_j \geq 0.
\]
\end{lemma}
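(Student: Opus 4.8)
The plan is to dominate the discrete layer component $W$ by a conveniently chosen mesh function $\hat W$ that is independent of $t$, and then to invoke the discrete comparison principle of Lemma~\ref{minprindis}. The first instinct is to use the frozen continuous barrier $\frac{C_1}{\alpha}e^{(x_i-L)/\ve}$: thanks to the exponential fitting factor $\rho/(1-e^{-\rho})$ this satisfies the operator inequality $L_F^{N,M}\Phi\ge|f|$ in the fine region and at the transition point. However, in the coarse region the under-resolution makes $a_iD_x^-e^{(x_i-L)/\ve}\approx H^{-1}e^{(x_i-L)/\ve}$ rather than $\ve^{-1}e^{(x_i-L)/\ve}$, so the pointwise inequality fails there by the factor $\rho=H/\ve$. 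The essential idea is therefore to let $\hat W$ be the exact solution of an auxiliary one-dimensional discrete problem rather than a closed-form supersolution, so that the necessary accumulation of the forcing is captured automatically.

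Concretely, I would define $\hat W_i$ (a function of the spatial index alone) by $\hat W_0=0$ together with $\alpha D_x^-\hat W_i=\frac{C_1}{\ve}e^{(x_i-L)/\ve}$ at every mesh point, where $D_x^-$ carries the same fitting weight $\rho/(1-e^{-\rho})$ at $x_i=L-\sigma$ that $L_F^{N,M}$ does. Viewing $\hat W$ as a $t$-independent mesh function, one has $D_t^-\hat W=0$, and since the forcing is positive $\hat W$ is increasing with $\hat W_i\ge0$. Using $a\ge\alpha$, $b\ge0$ and $D_x^-\hat W\ge0$ then gives $L_F^{N,M}\hat W_i\ge \alpha D_x^-\hat W_i=\frac{C_1}{\ve}e^{(x_i-L)/\ve}\ge|f(x_i,t_j)|$ by the forcing bound~\eqref{flow-towards}. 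As $\hat W\pm W$ vanishes on $x=0$ and is nonnegative at $t=0$, Lemma~\ref{minprindis} yields $|W(x_i,t_j)|\le\hat W_i$ for all $i,j$.

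It then remains to estimate $\hat W_i$ for $x_i\le L-\sigma$, which is a purely algebraic computation on the explicit solution of the recurrence (in the singularly perturbed regime $\sigma=\ve\ln N$, so that $e^{-\sigma/\ve}=N^{-1}$). In the coarse region the step is the constant $H$, so for $i<N/2$ one has $\hat W_i=\frac{C_1H}{\alpha\ve}\sum_{k=1}^i e^{(x_k-L)/\ve}$; the summands form a geometric progression with ratio $e^{-\rho}$, giving $\hat W_i\le\frac{C_1}{\alpha}\frac{\rho}{1-e^{-\rho}}e^{(x_i-L)/\ve}$. Since $x_i\le L-\sigma-H$ there, we have $e^{(x_i-L)/\ve}\le N^{-1}e^{-\rho}$, whence $\hat W_i\le\frac{C_1}{\alpha}\frac{\rho}{e^{\rho}-1}N^{-1}\le\frac{C_1}{\alpha}N^{-1}$. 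The single remaining value, at the transition point $x_{N/2}=L-\sigma$, is controlled by the fitted step: solving the fitted equation there gives the jump $\hat W_{N/2}-\hat W_{N/2-1}=\frac{C_1(1-e^{-\rho})}{\alpha}N^{-1}\le\frac{C_1}{\alpha}N^{-1}$, so $\hat W_{N/2}\le\frac{2C_1}{\alpha}N^{-1}$. Together these yield $|W(x_i,t_j)|\le CN^{-1}$ for $x_i\le L-\sigma$.

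The step I expect to be the main obstacle is exactly this coarse-region estimate: any naive pointwise comparison function is too large there, and the correct $O(N^{-1})$ bound only surfaces after the geometric summation, in which the elementary inequality $\rho/(e^{\rho}-1)\le1$ does the essential work of absorbing the $\ve^{-1}$ amplification of the forcing, while the exponential fitting at $x_{N/2}$ is what keeps the single under-resolved transition jump at size $O(N^{-1})$. Everything else — the supersolution property and the reduction to a $t$-independent barrier — is routine once one commits to defining $\hat W$ implicitly through the discrete operator rather than writing it in closed form.
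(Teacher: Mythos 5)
Your proof is correct and takes essentially the same route as the paper: the paper's barrier $B_1(x_i,t_j)=\frac{C_1}{\ve}\frac{H}{\alpha}\sum_{n=1}^{i}e^{(x_n-L)/\ve}$ is exactly your cumulative-sum supersolution $\hat W$ on the coarse region (verified, as you do, via $L_F^{N,M}B_1\ge a D_x^-B_1\ge |f|$ and the geometric-sum bound with $\rho/(e^{\rho}-1)\le 1$), and its second barrier $B_2$ at $x_i=L-\sigma$ encodes precisely your fitted-jump estimate $\frac{C_1}{\alpha}(1-e^{-\rho})N^{-1}$. The only difference is organizational: the paper invokes Lemma~\ref{minprindis} twice, on the coarse region and then on the transition-point column, whereas you fold the fitting weight into one globally defined barrier and apply the comparison principle once.
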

\begin{proof} For all $x_i < L-\sigma $, consider the barrier function
\[
 B_1(x_i,t_j) :=  \frac{C_1}{\ve} \frac{ H}{\alpha}  \sum _{n=1}^i  e^{\frac{ x_n-L}{\ve }} \quad \hbox{with} \quad
D_x^-B_1 (x_i,t_j) =  \frac{C_1}{\alpha \ve} e^{\frac{ x_i-L}{\ve }},
\]
 and the constant $C_1$ is given in~\eqref{flow-towards}. From Lemma~\ref{minprindis},  at all mesh points $x_i <L -\sigma $, outside the layer
\[
L_F^{N,M} B_1(x_i,t_j) \geq a(x_i,t_j) D^-_x  B_1 (x_i,t_j) \geq \vert f(x_i,t_j)\vert .
\]
%using the inequality $\rho \geq 1- e^{-\rho}, \forall \rho \geq 0$ {\cbr [Is this inequality used?]}.
Hence, for $x_i < L-\sigma$,
\[
W(x_i,t_j) \leq  B_1(x_i,t_j) \leq C\frac{H}{\ve} e^{\frac{ x_i-L}{\ve}}\frac{1-e^{-\frac{L}{\ve}}}{1-e^{-\frac{ H}{\ve }}}
\leq Ce^{\frac{ x_{i+1}-L}{\ve}}\leq Ce^{-\frac{\sigma}{\ve}} \leq CN^{-1}.
\]
At the transition point $x_i=L-\sigma$, use Lemma~\ref{minprindis} and the barrier function
\[
 B_2(L-\sigma ,t_j) = C_1\frac{H}{\ve}  e^{-\frac{ \sigma}{\ve}} \frac{(1-e^{-\rho })}{\rho} +CN^{-1},
\quad  B_2(L-\sigma -H,t_j) = CN^{-1},
\]
to complete the proof.
\end{proof}

%We form a global approximation $\bar U$ to the solution of (\ref{flow-towards}), using simple  bilinear interpolation:
%\[ \bar U(x,t) : = \sum _{i=0,j=1}^{N,M} U(x_i,t_j) \varphi _i(x)  \eta_j(t), \]
%where $\varphi _i(x)$ is the standard hat function centered at $x=x_i$ and $ \eta _j(t) :=(t-t_{j-1})/k, \, t \in [t_{j-1},t_j), \,  \eta _j(t) :=0, \, t \not \in %[t_{j-1},t_j)$.

\begin{theorem}\label{Result1}
If $U$ is the solution of (\ref{discrete-problem}) and $u$ is the solution of (\ref{main-problem}), (\ref{compat}), (\ref{flow-towards}) then
\[
\Vert \bar U -u \Vert  \leq  C (N^{-1}\ln N + M^{-1}\ln M).
\]
\end{theorem}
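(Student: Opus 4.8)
The plan is to run the classical Shishkin-mesh argument: split the nodal error using the continuous and discrete decompositions, estimate the regular and layer components separately with the discrete comparison principle (Lemma~\ref{minprindis}), and finally pass from the nodal bound to the global bound on $\bar U - u$ by interpolation, exactly as at the end of Theorem~\ref{Result0}.

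First I would write $u - U = (v - V) + (w - W)$, with $v,w$ the continuous components of \eqref{decomp} and $V,W$ the discrete components. For the regular part, Lemma~\ref{Lemma2} gives $\ve$-uniform bounds on the derivatives of $v$ up to order two, so the truncation error $(L_F^{N,M}-L)v$ is, via the integral (mean-value) representation used in the proof of Theorem~\ref{Result0}, bounded by $C(\max_i h_i + \max_j k_j) \le C(N^{-1}+M^{-1})$ on every mesh interval. The only node needing separate attention is the transition point $x_i = L-\sigma$, where the fitting factor $\frac{\rho}{1-e^{-\rho}}-1$ multiplies $D_x^- v$; here I would use that $v_x$ is bounded and that $H\bigl(\frac{\rho}{1-e^{-\rho}}-1\bigr)$ stays controlled to keep the consistency error $O(N^{-1})$. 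Applying Lemma~\ref{minprindis} on all of $\Omega^N_S$ then gives $\Vert v - V\Vert \le C(N^{-1}+M^{-1})$.

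The heart of the proof is $w - W$, which I would treat by region-splitting in both coordinates, using the further decomposition $w = w_0 + w_1$ of \eqref{extra-decomp} to separate the temporal initial layer (carried by $w_0$, with its $e^{-p\alpha t/\ve}$ factor) from the part $w_1$ whose time derivatives are well behaved by \eqref{sharper}. In the coarse spatial region $x_i \le L-\sigma$ I would combine the exponential smallness $\vert w(x_i,t_j)\vert \le C e^{(x_i-L)/\ve} \le Ce^{-\sigma/\ve} = CN^{-1}$ with the discrete bound $\vert W\vert \le CN^{-1}$ of Lemma~\ref{Lemma4}, giving $\vert w - W\vert \le CN^{-1}$. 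In the fine spatial region $x_i \ge L-\sigma$, the mesh width is $h = 2\ve N^{-1}\ln N$, so the spatial truncation error obeys $\vert (D_x^- - \partial_x)w\vert \le C h\,\ve^{-2} e^{(x_i-L)/\ve} = C N^{-1}\ve^{-1}\ln N\, e^{(x_i-L)/\ve}$; feeding this, together with the interface value $\vert w-W\vert \le CN^{-1}$ at $x=L-\sigma$, into a discrete barrier of the form $CN^{-1}\ln N$ plus a discrete exponential and invoking Lemma~\ref{minprindis} on $(L-\sigma,L]\times(0,T]$ converts the $\ve^{-1}$ into the desired $O(N^{-1}\ln N)$. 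The temporal initial layer in $w_0$ is handled by the identical argument in the $t$-direction, using the transition point $\tau$, the smallness $e^{-p\alpha t_j/\ve} \le e^{-p\alpha\tau/\ve} \le M^{-1}$ for $t_j \ge \tau$ with a time-direction analogue of Lemma~\ref{Lemma4} for the coarse region, and the truncation bound $CM^{-1}\ve^{-1}\ln M\, e^{(x_i-L)/\ve}$ with a discrete barrier in the fine time strip, producing the $O(M^{-1}\ln M)$ contribution; by \eqref{sharper} the smooth-in-time remainder $w_1$ only contributes at the $O(M^{-1})$ level.

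Collecting these estimates yields the nodal bound $\vert (u-U)(x_i,t_j)\vert \le C(N^{-1}\ln N + M^{-1}\ln M)$, which then extends to the global bound on $\Vert \bar U - u\Vert$ as in Theorem~\ref{Result0}, via standard interpolation-error estimates for the interpolant $\bar U$, the smooth component contributing $O(N^{-1}+M^{-1})$ and the layer region at most $O(N^{-1}\ln N + M^{-1}\ln M)$ because the mesh is fine there. The main obstacle I anticipate is the fine-region step: building discrete barrier functions that are genuine supersolutions of $L_F^{N,M}$ (in particular across the modified transition node) while simultaneously dominating the $O(\ve^{-1}\ln N)$-type truncation data, so that the comparison principle produces only a logarithmic loss. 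The secondary difficulty is the coupling of the two layers near the corner $(L,0)$, where both fine meshes overlap and the split $w = w_0 + w_1$ together with careful bookkeeping of a mixed barrier is essential.
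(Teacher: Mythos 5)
Your overall strategy coincides with the paper's proof: the same splitting $u-U=(v-V)+(w-W)$, the coarse-region triangle inequality with Lemma~\ref{Lemma4}, the fine-region barrier of the form $C(N^{-1}\ln N+M^{-1}\ln M)e^{(x_i-L)/\ve}$, the use of $w=w_0+w_1$ and \eqref{sharper} to control the time direction for $t_j>\tau$, and the interpolation-based extension to a global bound. However, there is one genuine gap, and it sits exactly where you wave it away: the transition node $x_i=L-\sigma$ in the \emph{regular}-component estimate. You claim the consistency error there stays $O(N^{-1})$ because ``$H\bigl(\frac{\rho}{1-e^{-\rho}}-1\bigr)$ stays controlled.'' This is false in the singularly perturbed regime. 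The extra defect introduced by the fitted operator is $a\bigl(\frac{\rho}{1-e^{-\rho}}-1\bigr)D_x^-v$, and since $D_x^-v=O(1)$, it is of size $\frac{\rho}{1-e^{-\rho}}-1$ with $\rho=H/\ve$; when $\ve\ll H\approx CN^{-1}$ this behaves like $H/\ve$, which is unbounded as $\ve\to 0$ (and multiplying by another factor of $H$ still gives $H^2/\ve$, also unbounded). So the truncation error at that node is \emph{not} small, and a comparison argument with a barrier of pure size $C(N^{-1}+M^{-1})$ does not close.

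The missing idea is the paper's barrier
\[
B_3(x_i,t_j)=CM^{-1}t_j+CN^{-1}\begin{cases} x_i, & x_i<L-\sigma,\\ x_i+1, & x_i\geq L-\sigma,\end{cases}
\]
which has an $O(N^{-1})$ \emph{jump} across the transition point. At $x_i=L-\sigma$ one has $D_x^-B_3 \geq CN^{-1}/H$, and after multiplication by the large fitting factor $a\frac{\rho}{1-e^{-\rho}}$ this produces a term of size $\frac{CN^{-1}}{\ve(1-e^{-\rho})}$, which dominates the large truncation defect $C\frac{\rho}{1-e^{-\rho}}=\frac{CH}{\ve(1-e^{-\rho})}$ precisely because $H\leq CN^{-1}$; away from that node $B_3$ acts as an ordinary $O(N^{-1}+M^{-1})$ supersolution. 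With this device the bound $\vert (V-v)(x_i,t_j)\vert\leq C(N^{-1}+M^{-1})$ follows from Lemma~\ref{minprindis}; without it (or an equivalent compensation mechanism) your regular-component step fails exactly when $\ve$ is small, which is the case the theorem is about. The remainder of your argument for $w-W$ and the global extension is sound and matches the paper.
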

\begin{proof} The nodal error is decomposed into two components
\[
u - U =  (v-V)+(w-W).
\]
As the derivatives of $v$ are bounded independently of $\ve$, we  have the truncation error bounds
\begin{align*}
\vert L_F^{N,M}(V-v)(x_i,t_j) \vert & \leq C(N^{-1}+ M^{-1}), \quad x_i \neq L-\sigma \\
\vert L_F^{N,M}(V-v)(L-\sigma ,t_j) \vert & \leq C(N^{-1}+ M^{-1})  + C\frac{\rho }{ 1-e^{-\rho} }.
\end{align*}
Use the discrete barrier function
\[
 B_3(x_i,t_j) =CM^{-1} t_j + CN^{-1}
\begin{cases}
x_i, &  x_i < L-\sigma, \\ x_i+1, & x_i \geq L-\sigma,
 \end{cases}
\]
and the discrete comparison principle  (Lemma~\ref{minprindis}) to deduce the nodal error bound
 \begin{equation}\label{boundV}
\vert (V-v)(x_i,t_j) \vert \leq B_3(x_i,t_j)  \leq C(N^{-1}+ M^{-1}).
\end{equation}
Outside the fine mesh, using (\ref{decomp-bounds}b) and Lemma \ref{Lemma4}, we have that
\[
\vert (W-w)(x_i,t_j) \vert  \leq \vert W(x_i,t_j) \vert +\vert w(x_i,t_j) \vert \leq  C N^{-1}, \qquad x_i \leq L-\sigma.
\]
Within the fine mesh in both space and time, we have  the truncation error bound  for $L-\sigma < x_i < L, \, t_j \leq \tau$,
\[
\vert L_F^{N,M}(W-w)(x_i,t_j) \vert \leq C\frac{N^{-1}\ln N + M^{-1}\ln M}{\ve}e^{\frac{ x_i-L}{\ve}},
\]
where we use $\ve ^{-1} \leq C_2  \ln N$ in the case where  the mesh is piecewise uniform in space.
For $L-\sigma < x_i < L, \tau < t_j \leq T$, using (\ref{extra-decomp}) and (\ref{sharper})
\begin{eqnarray*}
\left \vert \left( D^-_t - \frac{\partial }{\partial t} \right) w  \right \vert  \leq C \ve ^{-1} e^{{-\frac{p\alpha \tau}{\vr}}} e^{\frac{ x_i-L}{\ve}}+ \left \vert \left( D^-_t - \frac{\partial }{\partial t} \right) w_1 \right \vert \leq C\frac{M^{-1}}{\ve} e^{\frac{ x_i-L}{\ve}}.
\end{eqnarray*}
Hence, for $L-\sigma < x_i < L, t_j  >0 $,
\[
\vert L_F^{N,M}(W-w)(x_i,t_j) \vert \leq C\frac{N^{-1}\ln N + M^{-1}\ln M}{\ve}e^{\frac{ x_i-L}{\ve}},
\]
and
$\vert (W-w) (L-\sigma ,t_j) \vert \leq CN^{-1} , (W-w) (x_i ,0) =0$. Then, using
\[
 B_4(x_i,t_j)= C(N^{-1}\ln N + M^{-1}\ln M)e^{\frac{ x_i-L}{\ve}},
\]
as a discrete barrier function in the fine mesh, we can establish for sufficiently large $N$ and $M$
that
\begin{equation}\label{boundW}
\vert (W-w)(x_i,t_j) \vert \leq C(N^{-1}\ln N + M^{-1}\ln M)e^{\frac{ x_i-L}{\ve}};
\end{equation}
where we use the fact that for $0 < z \leq \delta$
\[
\frac{1-e^{-z}}{z} \geq \frac{1-e^{-\delta }}{\delta } \geq \frac{1}{2} \quad \hbox{for sufficiently small} \quad \delta .
\]
Combining all of the bounds (\ref{boundV}) and  (\ref{boundW}), we deduce the nodal error bound
\[
\Vert u - U \Vert _{\Omega ^N_S} \leq C(N^{-1}\ln N + M^{-1}\ln M).
\]
Combine the arguments in \cite[Theorem 3.12]{fhmos} with the interpolation bounds in \cite[Lemma 4.1]{styor4} and the bounds on the derivatives of the components $v,  w$
to extend this bound to a global error bound.
\end{proof}

\begin{remark} If, in addition to the  constraints~\eqref{flow-towards}, we have $f(x,0)=0, \,  x \in [0, L],$ then $w_0 \equiv 0$ in (\ref{extra-decomp}) and the bounds in (\ref{sharper}) apply to $w$. Hence, in this case, one can use a uniform mesh in time and retain the same error bound given in Theorem \ref{Result1}.
 \end{remark}

\section{Transporting a pulse}\label{sec:pulse}

In the second problem class, the initial or boundary condition contains a  layer, which generates an interior layer in the solution of the transport equation.
Consider problem (\ref{main-problem}), (\ref{compat}) with an $\ve$-dependent initial condition
\begin{subequations}\label{pulse}
\begin{equation}
u(x,0) = \phi_1(x) +\phi _2(x;\ve);
\end{equation}
 where $\phi_1(x)$ is a smooth function and $\phi_2$ has a layer in an interior point $d$ with $0 <d < L$ in that
\begin{equation}
\phi _2^{(i)}(0)=0, \ \vert \phi _2^{(i)}(x) \vert \leq C \ve ^{-i} e^{-\frac{\vert x-d\vert}{\ve }}, \ 0 \leq i \leq 2;
\end{equation}
 or if the layer occurs at the endpoint $d=0$ then
\begin{equation}
\vert \phi _2^{(i)}(x) \vert \leq C x^ \ell \ve ^{-(\ell +i)} e^{-\frac{x}{\ve }}, \ 0 \leq i \leq 2;\quad \ell \geq 3.
\end{equation}
\end{subequations}
 The condition on $\ell$ is required to guarantee the regularity of the singular component~\eqref{decomp2} associated with $u$.  In this section the  problem data, $a,b,f $,  are smooth functions that do not depend on the singular perturbation parameter $\ve$.
 The pulse in the initial condition is transported along the characteristic  curve $x=g(t;d,0) $,
 which is the solution of the initial value problem
\begin{equation}\label{charact}
\frac{dx}{dt} = a(x,t), \qquad x(0)=d.
\end{equation}
Note that
\begin{align*}
  \vert \phi _2(x) \vert & \leq CN ^{-1}, && \hbox{if }  \vert x-d \vert \geq \ve \ln N \  \hbox{and } d >0 \qquad \hbox{ or}  \\
  \vert \phi _2(x) \vert & \leq C_q N ^{-(q- \ell )} (N^{-1}\ln N) ^ \ell, && \hbox{if }   x \geq q\ve \ln N \ \hbox{ and } d =0.
\end{align*}
If $\ve \ln N$ is sufficiently large ($\geq L/2)$, then we can solve for $u$ using a classical scheme on a uniform mesh.
Otherwise, we align the mesh along the characteristic curve passing through $(d,0)$.

The solution $u$ can be decomposed into the sum of a regular component $v \in C^2(\bar \Omega)$ and a layer component $w\in C^2(\bar\Omega)$, defined as the solutions of
\begin{subequations}\label{decomp2}
\begin{align}
Lv &= f, \quad (x,t)  \in \Omega; \quad v(0,t)=u(0,t),\ v(x,0)= \phi _1(x), \\
Lw &= 0, \quad (x,t)  \in\Omega;\quad w(0,t)=0,\ w(x,0) = \phi _2(x,\ve).
\end{align}
\end{subequations}
Based on this decomposition, we first  generate a discrete approximation $V$ to $v$ using a uniform rectangular mesh and simple upwinding.
Consider the transformed variables, $\widetilde u(s,t) = u(x,t)$ where
$
s:= x-g(t; d,0)
$
and $g'(t; d,0)=a(g(t; d,0),t)$, then
\[
\widetilde w_t + [\widetilde  a(s,t) -\widetilde  a(0,t)] \widetilde w_ s + \widetilde b \widetilde w =0, \quad \widetilde w(s,0) = \phi _2(s+d).
\]
Note that %  the time derivatives of {\cbl $\widetilde w$} are bounded independently of $\ve$
\[
\left \vert \frac{\partial ^{i+j} \widetilde w}{ \partial s^i  \partial t ^j} (s,t) \right \vert \leq C\ve ^{-i} e^{-\frac{\vert s\vert}{\ve }}, \quad 0 \leq i+j \leq 2.
\]
 Throughout the paper we assume that the characteristic curves $x=g(t; d,0)$ can be explicitly determined. To generate a parameter-uniform approximation to $w$, we solve the problem in the transformed variables $(s,t)$. We describe below a numerical scheme  where it is assumed that $a_x$  does not change sign either side of $x=g(t; d,0)$. Otherwise, a more general scheme is required which is described in the Appendix of this paper.

 We use a uniform mesh in time and a piecewise-uniform mesh in space, where the space domain is split into the subdomains
\begin{subequations}\label{transition}
\begin{align}
&[-d,-\sigma _d]\cup [-\sigma _d, \sigma _d] \cup [\sigma _d ,L-d], \nonumber \\
& \hspace{1cm} \sigma _d:= \min \left \{ \frac{d}{2}, \frac{L-d}{2}, \ve \ln N \right \}, \text{ if } d >0; \\
&[0, \sigma _0] \cup [\sigma _0 ,L], \ \sigma _0:= \min \left \{ \frac{L}{2}, ( \ell+1)\ve \ln N \right \}, \text{ if }  d =0.
\end{align}
\end{subequations}
 Then, consider the  following  upwind scheme{\footnote{The upwind finite difference operator is defined to be
\[
b(s_i,t_j)  D^*_ s \widetilde W : = 0.5\Bigl( (b(s_i,t_j) + \vert b(s_i,t_j)\vert ) D^-_ s + (b(s_i,t_j) - \vert b(s_i,t_j)\vert ) D^+_ s \Bigr) \widetilde W.
\]
}}  on the subdomain $\widetilde \Omega:= (-d,L-d)\times (0,T]$
\begin{subequations}\label{scheme-P-singular}
\begin{align}
&D_t^- \widetilde W +[\widetilde  a(s_i,t_j) -\widetilde a(0,t_j)]   D^*_ s \widetilde W+\widetilde b\widetilde W= 0,  \quad (s_i,t_j)  \in \widetilde \Omega; \\
&\widetilde W(-d,t_j) = \widetilde W(L-d,t_j) = 0; \ \widetilde W(s_i,0) = \phi _2(s_i+d).
\end{align}
At $s_i=0$, the stencil collapses to the two point scheme
\begin{equation}
(D_t^- \widetilde W +\widetilde b\widetilde W)(0,t_j) = 0,\ t_j >0,  \ \widetilde W(0,0) = \phi _2(d).
\end{equation}
\end{subequations}
Hence $\widetilde W (0,t_j)$ can be easily determined and the remaining nodal values for the interior layer function, $\widetilde W(s_i,t_j), s_i \in (-d,L-d)\setminus \{ 0\}$, can be determined  separately on either side of $s=0$.
The layer component $w$ is similarly approximated when $d=0$.

We  can generate a global  approximation $\bar W$ to $w$ over the region $ \bar \Omega$
{\footnote{For $x >L, 0< t \leq T$, the data $a,b$ can be smoothly extended so that the problem (\ref{decomp2}b) is well defined over $\bar \Omega$ and the non-negativity of $a,b$ is retained by their extensions.}} using  bilinear interpolation. Then set
\[
\bar W \equiv 0 \text { on } \bar \Omega \backslash\left\{ (x,t), \,  -d+g(t;d,0)\le x \le L, \ 0 \le t \le T \right\}.
\]
We  form $\bar U=\bar V + \bar W$,  where $\bar V$ is the global approximation to $v$ with a classical scheme on a uniform mesh, and we can deduce the global error bound, over $\bar \Omega$,
\[
\Vert u - \bar U \Vert   \leq C N^{-1} (\ln N )^2+ CM^{-1}
\]
for the problem class (\ref{main-problem}), (\ref{compat}), (\ref{pulse}).

 A related problem to  (\ref{main-problem}), (\ref{compat}), (\ref{pulse}) when the pulse occurs in the boundary condition is considered in the Appendix. The numerical approximation to this problem class is used in \S\ref{sec:numerical}.

\begin{remark}
The construction of the numerical schemes of this section  and the Appendix is based on the characteristic curves $g(t;d,0)$. If the location of the characteristic curves need to be estimated, then we can use a  Runge-Kutta method to solve the nonlinear ode $x'(t)=a(x,t), \, x(0) = s$. However, to preserve parameter-uniform convergence then for any  characteristic curves passing through $(x(0),0)$ where $x(0) \in [d-2 \sigma_d ,d+2  \sigma_d ]$ needs to be determined so that $\vert x(t_i)- x^N(t_i) \vert \leq \sigma_d $ for an approximate curve $( x^N(t),t)$.
\end{remark}
\section{\bf Flow away from an attractive force} \label{sec:FlowAway}

In the third problem class to be examined, the forcing term has a layer at the inflow of the domain.
This generates a boundary layer on the left and an interior layer emanating from the initial inflow  boundary point.

Consider problem (\ref{main-problem}), (\ref{compat}) with the additional conditions
\begin{subequations}\label{flow-away-gen}
\begin{align}
&a_t(0,0) =0, \quad
\left \vert \frac{\partial ^{i+j} f}{\partial x^i\partial t^j} (x,t) \right \vert  \leq  C x^{2-i}\ve ^{i-3}  e^{-\frac{x}{\ve}}, \ 0\leq i+j \leq 2; \\
&a \in C^3(\bar \Omega), \quad b(x,t) \equiv 0,\ f(x,t)=f(x), \   \forall (x,t) \in \bar \Omega .
\end{align}
\end{subequations}
In this section the  problem data, $a,b, \psi , \phi $,  are smooth functions that do not depend on the singular perturbation parameter $\ve$.
By Lemma \ref{minprin},
\[
\vert u(x,t) \vert \leq  \frac{C}{\alpha} \left(1-e^{\frac{ -x}{\ve}}\right).
\]
{\bf Notation:}  For each $t\geq 0$, define $w_0,w_1 \in C^2(\bar \Omega)$ as the solutions of
\begin{subequations}\label{w01}
\begin{align}
a(x,t) \frac{\partial w_0}{\partial x}  &= f(x), \quad w_0(\infty ,t) =0,
\\
a(x,t) \frac{\partial w_1}{\partial x} &= - \frac{\partial w_0}{\partial t}, \quad w_1(\infty ,t) =0.
\end{align}
\end{subequations}
 These functions are used below to show that the solution $u$ exhibits a boundary layer near $x=0$. They satisfy
\[
\left \vert \frac{\partial ^{i+j} w_0}{\partial x ^i\partial t ^j}  (x,t) \right \vert \leq C \ve ^{-i}e^{\frac{ -x}{\ve}},
\quad \left \vert \frac{\partial ^{i+j} w_1}{\partial x ^i\partial t ^j}  (x,t) \right \vert \leq C \ve ^{1-i}e^{\frac{ -x}{\ve}}, \quad 0 \leq i+j \leq 2.
\]

We can  decompose the solution into three distinct components
\[
u(x,t) =v(x,t)+ w(x,t) +z(x,t),
\]
 where $w$ contains a boundary layer and $z$ contains an interior layer. They are defined as the solutions of the problems:
\begin{subequations}\label{decomposition-gen}
\begin{align}
Lv  &= 0, \ (x,t)  \in \Omega ; \quad  v(0,t) = u(0,t),\  v(x,0) =  u(x,0), \ v \in C^2(\bar \Omega); \\
Lw  &= f(x), \ (x,t)  \in \Omega ; \quad  w(0,t) = 0,\  w(x,0) =  w_0(x,0)-w_0(0,0); \\
Lz  &= 0, \ (x,t)  \in \Omega;\quad z(0,t) = 0,  z(x,0) =  - w(x,0).
\end{align}
\end{subequations}
Note that the initial condition for the component $w$ is equivalent to
\[
a(x,0) \frac{\partial w}{\partial x}(x,0)=f(x).
\]
In addition, the interior layer component $z$ is given by
\[
z(x,t) =
\begin{cases}
0, &  x \leq g(t;0,0), \\
w_0(0,t) -  w_0(x-g(t),t) , &  x \geq g(t;0,0).
\end{cases}
\]
From (\ref{flow-away-gen}a)  $f(0)=f'(0)=0$ and hence $w(0,0)=w_x(0,0)=w_{xx}(0,0)=0$. By (\ref{compat}) it follows that  $w,z \in C^2(\bar \Omega)$.
A global approximation $\bar Z$ to the solution of   (\ref{decomposition-gen}c) can be generated using the algorithm in \S\ref{sec:pulse}, where $d=0$.
Note that this appoximation is generated on a mesh, defined in the transformed coordinate system.
 We next establish  bounds on the first and second derivatives of $w$.
\begin{lemma} There exists a function $\chi  \in C^2(\bar \Omega)$ such that  the solution of (\ref{decomposition-gen}b)
satisfies the following bounds
\begin{subequations}\label{conj}
\begin{align}
\vert w(x,t) - \chi (x,t)\vert & \leq Ce^{\frac{ -x}{\ve}},  \
 \left \Vert \frac{\partial ^{i+j} \chi }{\partial x ^i\partial t ^j}  \right \Vert  \leq C , i+j \leq 2; \label{conj1}  \\
 \left \vert \frac{\partial ^{i+j} (w -\chi) }{\partial x ^i\partial t ^j}  (x,t) \right \vert & \leq C \ve ^{-i}e^{\frac{ -x}{\ve}}, 0 \leq i+j \leq 2 \label{conj3}.
\end{align}
\end{subequations}
\end{lemma}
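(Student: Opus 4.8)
The plan is to take the regular component to be $\chi := w - w_0$, where $w_0$ is the boundary-layer function from \eqref{w01}. With this choice $w-\chi = w_0$ identically, so the estimate $\vert w-\chi\vert \le Ce^{-x/\ve}$ in \eqref{conj1} and the derivative bounds \eqref{conj3} are immediate from the stated estimates on $w_0$ (namely $\vert \partial^{i+j}w_0/\partial x^i\partial t^j\vert \le C\ve^{-i}e^{-x/\ve}$). Hence the entire content of the lemma reduces to proving that $\chi\in C^2(\bar\Omega)$ with the $\ve$-uniform bound $\Vert \partial^{i+j}\chi/\partial x^i\partial t^j\Vert \le C$ for $i+j\le 2$.

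First I would identify the problem solved by $\chi$. Subtracting the relation $a\,\partial_x w_0 = f$ from $Lw=f$ and using $b\equiv 0$ gives $L\chi = -\partial_t w_0$ in $\Omega$, with boundary datum $\chi(0,t)=w(0,t)-w_0(0,t)=-w_0(0,t)$ and, since $w(x,0)=w_0(x,0)-w_0(0,0)$, the constant initial datum $\chi(x,0)=-w_0(0,0)$. The forcing $-\partial_t w_0$ and the boundary datum $-w_0(0,t)$ are $O(1)$ functions, bounded $\ve$-uniformly by the estimates following \eqref{w01}, so a first barrier-function argument with Lemma~\ref{minprin}, exactly as in the proof of Lemma~\ref{Lemma2}, yields $\Vert\chi\Vert\le C$.

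Next I would bootstrap to the derivatives. Differentiating the equation for $\chi$ and using that $\partial_x\chi$ and $\partial_t\chi$ satisfy first-order transport problems (as in \eqref{w_x}), the $\ve$-uniform bounds on $w_0$ and on the corrector $w_1$ from \eqref{w01} give $\Vert\partial_x\chi\Vert,\Vert\partial_t\chi\Vert\le C$; here $w_1$ is what absorbs the leading contribution of $\partial_t w_0$, since $a\,\partial_x w_1=-\partial_t w_0$. For the second derivatives I would pass to the closed-form representation \eqref{exact} (with $B\equiv 1$ because $b\equiv 0$), writing $\chi$ as an integral of the layer source $-\partial_t w_0$ along the characteristics $x=g(z;s,\tau)$. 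The key mechanism is that integrating a quantity bounded by $\ve^{-m}e^{-x/\ve}$ along a characteristic gains one power of $\ve$, since $\int e^{-x/\ve}\,dx\le C\ve$; this compensates the $\ve^{-1}$ produced on differentiation and delivers the first-derivative bounds and, away from the curve $x=g(t;0,0)$, the second-derivative bounds as well. Differentiation under the integral sign is controlled through the variational equation for $\partial_s g$, whose boundedness rests on the hypothesis $a\in C^3(\bar\Omega)$.

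The hard part will be the uniform control of the second derivatives along the characteristic curve $x=g(t;0,0)$. The transported initial/boundary transition and the accumulated source $f$ interact in an $O(\ve)$-wide neighbourhood of this curve, and a naive estimate of $\partial^2\chi$ there only gives $O(\ve^{-1})$; obtaining the $\ve$-uniform bound requires exploiting the exact cancellation between the transported datum and the source integral (so that the interior transition carried by $\chi$ is genuinely weak) together with the corner compatibility $a_t(0,0)=0$, which removes the leading part of $\partial_t w_0$ near $(0,0)$ and secures the third-order regularity of $w_0$ entering the estimate. This interior-characteristic estimate is the step I would expect to be the genuine obstacle; the remaining estimates away from the curve follow the pattern already used in Lemma~\ref{Lemma2}.
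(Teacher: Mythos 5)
There is a genuine gap, and it lies in your very first move: the choice $\chi := w - w_0$. With $b\equiv 0$ this function solves $L\chi = -\partial_t w_0$, and the corrector $w_1$ of \eqref{w01} (which you yourself introduce, via $a\,\partial_x w_1 = -\partial_t w_0$) is precisely the layer part of the solution generated by that source. In other words, $w - w_0 = \chi^* + w_1$, where $\chi^* := w - w_0 - w_1$ is the function the paper works with. Now $w_1$ is a boundary-layer function of amplitude $O(\ve)$ but width $O(\ve)$: the paper's bounds give $\vert \partial_{xx} w_1 \vert \leq C\ve^{-1}e^{-x/\ve}$, and this is generically sharp, since $\partial_x\partial_t w_0 = f\,a_t/a^2$ is of size $\ve^{-1}$ at $x \sim \ve$ for times $t$ bounded away from $0$ where $a_t(x,t)\neq 0$ (the hypothesis $a_t(0,0)=0$ only helps at the corner). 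Consequently $\Vert \partial_{xx}(w-w_0)\Vert$ blows up like $\ve^{-1}$, so your $\chi$ cannot satisfy the $\ve$-uniform bound $\Vert \partial^{i+j}\chi/\partial x^i\partial t^j\Vert \leq C$ for $i+j=2$ required by \eqref{conj1}. The ``hard part'' you defer --- uniform control of the second derivatives --- is therefore not a difficult estimate awaiting a clever cancellation; for your choice of $\chi$ it is false, and no argument can close it.

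The repair is exactly the ingredient you already have in hand but use only as an estimation tool: put $w_1$ \emph{inside} the smooth component, i.e.\ define $\chi := w - w_0 - w_1$ as in \eqref{chi}. Then the residual source is $L\chi = -\partial_t w_1$, which carries one extra power of $\ve$: $\vert \partial^{i+j}(L\chi)\vert \leq C\ve^{1-i}e^{-x/\ve}$. This gain is what makes the bootstrap through second derivatives close --- differentiating twice costs $\ve^{-2}$ but the source starts at $O(\ve)$ and integration along characteristics gains a further factor $\ve$ --- and the corner condition $a_t(0,0)=0$ is then used exactly once, to show $\partial_{xx}\chi(x,0) = -\partial_{xx}w_1(x,0)$ is bounded on the initial line. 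After that, bounding $\chi$ and its derivatives proceeds as in Lemma~\ref{Lemma2}, and \eqref{conj3} follows from $w - \chi = w_0 + w_1$ together with the stated bounds on $w_0$ and $w_1$, since $C\ve^{-i} + C\ve^{1-i} \leq C\ve^{-i}$. Your outline of the remaining machinery (barrier functions, transport problems for the derivatives, representation along characteristics) is consistent with the paper; the decomposition itself is where the proposal fails.
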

\begin{proof}  Consider the functions $w_0$ and $w_1$ defined in~\eqref{w01}.  Define $\chi  \in C^2(\bar \Omega)$ to be
\begin{equation}\label{chi}
\chi (x,t) :=(w-w_0-w_1)(x,t),
\end{equation}
which is the solution of the problem
\begin{subequations} \label{edp:chi}
\begin{align}
 L\chi & = - \frac{\partial w_1}{\partial t}, \quad  \chi (0,t) =- (w_0+w_1)(0,t),
\\
\chi (x,0) & = - w_0(0,0) - w_1(x,0).
\end{align}
\end{subequations}
Observe that,
\begin{align*}
\left \vert \frac{\partial ^{i+j} L\chi (x,t)}{\partial x ^i\partial t ^j}   \right \vert & =
\left \vert \frac{\partial ^{i+j+1} w_1 (x,t)}{\partial x ^i\partial t ^{j+1}}   \right \vert \leq C \ve ^{1-i}e^{\frac{ -x}{\ve}}, \quad 0 \leq i+j \leq 2;
\\
\left \vert \frac{\partial ^{j} \chi}{\partial t ^j}  (0,t) \right \vert &\leq C, \quad 0 \leq j \leq 2; \quad
\left \vert \frac{\partial \chi}{\partial x}  (x,0) \right \vert \leq  C \left \vert \frac{\partial w_1}{\partial x}  (x,0) \right \vert \leq C.
\end{align*}
Note also that
\[
\frac{\partial ^2\chi }{\partial x ^2} (x,0) =- \frac{\partial ^2w_1}{\partial x ^2}  (x,0)   = \left(  \frac{\partial }{\partial x } \left( \frac{1}{a}\right)  \frac{\partial w_0}{\partial t}   +\frac{1}{a} \frac{\partial }{\partial t}\left(  \frac{f}{a}\right) \right) (x,0) .
\]
If $a_t(0,0) = 0$, then
\[
\left \vert \frac{\partial ^2\chi}{\partial x ^2}  (x,0) \right \vert \leq C +C x \vert f(x) \vert \leq C.
\]
Hence, bounding $\chi $ and its derivatives as in Lemma~\ref{Lemma2}, we deduce that
\[
\left \Vert \frac{\partial ^{i+j} \chi }{\partial x ^i\partial t ^j}  \right \Vert \leq C ,\qquad  0 \leq i+j \leq 2.
\]
 Estimates~\eqref{conj3} follow from the definition~\eqref{chi} of $\chi$ and the bounds on the derivatives for $w_0$ and $w_1$.
\end{proof}

Based on these bounds on the derivatives of $w$, we will use a  Shishkin mesh   in  space and a uniform mesh in time, which we shall denote by
 $\Omega ^N_{SU} $.  The space domain is $[0,L]= [0, \sigma ]\cup [\sigma ,L]$ with the transition point as
\[
\sigma := \min \left \{ \frac{L}{2}, \ve\ \ln {N} \right \}.
\]
On this  mesh, we use a classical upwinded finite difference operator.
The discrete problem is: Find $W$ such that
\begin{subequations}\label{discrete-away}
\begin{align}
L^{N,M}W(x_i,t_j) &=  f(x_i), \quad (x_i,t_j) \in \Omega ^N_{SU};\\
 W(0,t_j)&=0,\ t_j \geq 0; \quad a(x_i,0)D_x^- W(x_i,0)=f(x_i),\ x_i >0.
\end{align}
 This new scheme also satisfies a discrete comparison principle.
\end {subequations}
\begin{theorem}\label{Main}
If $W$ is the solution of (\ref{discrete-away}) and $w$ is the solution of (\ref{decomposition-gen}b),  then
\[
\Vert \bar W -w \Vert  \leq CN^{-1} (\ln N)^2 +CM^{-1}.
\]
\end{theorem}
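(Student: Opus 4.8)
The plan is to follow the structure of the proof of Theorem~\ref{Result1}, adapting it to the two features that distinguish this problem: the boundary layer now sits at the \emph{inflow} edge $x=0$ rather than the outflow edge, and the discrete initial values are \emph{generated} by the upwind relation $a(x_i,0)D_x^-W(x_i,0)=f(x_i)$ rather than being nodally exact. I would first write the nodal error as $e:=W-w$, which satisfies $L^{N,M}e=-(L^{N,M}-L)w$ on $\Omega^N_{SU}$, together with $e(0,t_j)=0$ and with $e(x_i,0)$ equal to the error committed by the discrete initial condition. To estimate the consistency term I would use the decomposition $w=\chi+\eta$ supplied by the preceding lemma, where $\chi$ has $\ve$-uniformly bounded derivatives up to second order by~\eqref{conj1} and $\eta:=w_0+w_1$ satisfies $|\partial_x^i\partial_t^j\eta|\le C\ve^{-i}e^{-x/\ve}$ by~\eqref{conj3}, and then argue separately on the fine part $x_i\le\sigma$ and the coarse part $x_i\ge\sigma$.

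For the $\chi$-contribution the derivative bounds are $\ve$-uniform, so the classical consistency estimate gives $O(N^{-1}+M^{-1})$, controlled by a barrier analogous to $B_3$ in~\eqref{boundV}. For the $\eta$-contribution I would use smallness on the coarse mesh, where $e^{-x_i/\ve}\le e^{-\sigma/\ve}\le N^{-1}$ makes both $\eta$ and the discrete layer $O(N^{-1})$, the latter via a discrete barrier as in Lemma~\ref{Lemma4}; and consistency on the fine mesh, where $h/\ve=2N^{-1}\ln N$, $\|\eta_{xx}\|\le C\ve^{-2}$ and $\|\eta_{tt}\|\le C$ give $|(L^{N,M}-L)\eta|\le C\ve^{-1}N^{-1}\ln N\,e^{-x_i/\ve}+CM^{-1}$. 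The genuinely new ingredient is the initial error: viewing $W(x_i,0)$ as the upwind solution of the first-order ODE $a(x,0)w_x(x,0)=f(x),\ w(0,0)=0$, on the spatial Shishkin mesh, and using $|w_{xx}(x,0)|\le C\ve^{-2}e^{-x/\ve}$, a summation over the fine mesh bounds the resulting quadrature error by $|e(x_i,0)|\le CN^{-1}\ln N$. Since $b\equiv0$, this initial error is convected along the characteristics undamped and must be carried forward by the comparison principle.

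The main obstacle, and the reason the flow-towards argument cannot simply be repeated, is the barrier for the inflow layer. In Theorem~\ref{Result1} the layer sits at $x=L$ and the barrier $B_4\propto e^{(x-L)/\ve}$ \emph{increases towards} the layer, so $D_x^-B_4>0$ and the convective term alone dominates the fine-mesh residual. Here the layer decays like $e^{-x/\ve}$, for which $D_x^-e^{-x/\ve}<0$, so the same exponential barrier has the wrong sign; moreover the absence of a reaction term removes the zeroth-order contribution that might otherwise compensate. I would instead build a summation-type barrier $B(x_i)=\gamma\sum_{n=1}^{i}h_n e^{-x_n/\ve}$, for which $D_x^-B=\gamma e^{-x_i/\ve}$ telescopes exactly, so that $aD_x^-B\ge\alpha\gamma e^{-x_i/\ve}$ dominates the layer residual once $\gamma\sim\ve^{-1}N^{-1}\ln N$, while the geometric-sum estimate $\sum_n h_n e^{-x_n/\ve}\le C\ve$ keeps $B$ of size $O(N^{-1}\ln N)$.

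Combining this layer barrier with the time barrier $CM^{-1}t_j$ and the constant barrier carrying the initial error, and applying Lemma~\ref{minprindis}, yields the nodal estimate, the logarithmic factors being accumulated in the fine-mesh and transition-point contributions. Finally, as in Theorem~\ref{Result1}, the nodal bound is lifted to the stated global bound on $\bar W$ by combining \cite[Theorem 3.12]{fhmos} with the interpolation estimates of \cite[Lemma 4.1]{styor4} and the derivative bounds on $\chi$ and $\eta$.
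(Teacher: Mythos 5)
Your fine-mesh machinery is sound: the telescoping barrier $B(x_i)=\gamma\sum_{n=1}^{i}h_ne^{-x_n/\ve}$ with $D_x^-B(x_i)=\gamma e^{-x_i/\ve}$ is the right fix for the sign problem you correctly identify, the bound $\sum_nh_ne^{-x_n/\ve}\le C\ve$ does hold, and your quadrature estimate $|e(x_i,0)|\le CN^{-1}\ln N$ for the generated initial condition is correct. The genuine gap is the coarse-mesh step for the $\eta$-contribution. You propose to bound ``the discrete layer'' by $O(N^{-1})$ for $x_i\ge\sigma$ ``via a discrete barrier as in Lemma~\ref{Lemma4}'', but no such barrier argument is available here. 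Any discrete layer component defined through the transport scheme must carry the data of $\eta=w_0+w_1$, and $\eta$ is a layer at the \emph{inflow} edge: $\eta(0,t)=(w_0+w_1)(0,t)$ is $O(1)$. Since $b\equiv0$, the discrete operator transports $O(1)$ inflow values along the discrete characteristics with no damping, so the minimal admissible barrier (the discrete solution with inflow data $|\eta(0,t_j)|$, nonnegative initial data and nonnegative forcing) is itself $O(1)$ at coarse points below the characteristic $x=g(t;0,0)$; the comparison principle can certify nothing smaller there. Lemma~\ref{Lemma4} works in \S\ref{sec:FlowTowards} only because that layer component has \emph{zero} inflow and initial data and forcing concentrated at the outflow, so the barrier, a sum of increasing exponentials, is automatically small away from $x=L$. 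Here the decay of $\eta$ away from $x=0$ is a cancellation effect, between the transported boundary values and the integrated forcing along each characteristic, not a maximum-principle effect, and proving that the scheme reproduces this cancellation to accuracy $O(N^{-1}\ln N)$ is precisely the error bound you are trying to establish. As written, the argument is circular.

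This is exactly where the paper's proof takes a different route. It never defines a discrete layer via the transport scheme: it introduces $W_0,W_1$, discrete counterparts of \eqref{w01}, through the spatial recursions $a(x_i,t_j)D_x^-W_0=f(x_i)$, $W_0(L,t_j)=0$, and $a(x_i,t_j)D_x^-W_1=-D_t^-W_0$, $W_1(L,t_j)=0$; the generated initial condition $a(x_i,0)D_x^-W(x_i,0)=f(x_i)$ makes $W=W_0+W_1+X$ an \emph{exact} decomposition, and the closed-form representation \eqref{closed-form} exhibits $W_0,W_1,D_t^-W_0,D_t^-W_1$ as tail sums of decaying terms, giving $|W_0|,|W_1|\le Ce^{-x_i/\ve}$ with no comparison principle at all. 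That is where the smallness you assumed actually comes from; the transport comparison principle is then used only for $X-\chi$, where classical consistency suffices. Alternatively, your single-error-equation approach could be repaired without the paper's decomposition by estimating the residual $(L^{N,M}-L)\eta$ directly on the coarse mesh, where Taylor expansion with second derivatives is not allowed but the first-order bounds $|D_x^-\eta|\le C\bigl(\min(\ve,H)\bigr)^{-1}e^{-x_{i-1}/\ve}$ and $|\partial_x\eta|\le C\ve^{-1}e^{-x_i/\ve}$ hold; fed into your summation barrier these residuals still accumulate to $O(N^{-1}\ln N)$. Either way, the coarse-mesh step must be replaced; it cannot stand as a Lemma~\ref{Lemma4}-type smallness claim.
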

\begin{proof}
In analogous fashion to the decomposition of $w$, the discrete function $W$ can be decomposed into several  components.  Denote by $W_0,W_1, X$ the discrete counterpart of $w_0,w_1$ and $\chi$ given in~\eqref{w01} and~\eqref{edp:chi}. These functions are defined as the solutions of the following discrete problems.
\begin{align*}
& a(x_i,t_j) D^-_xW_0(x_i,t_j) =f(x_i), \quad W_0(L,t_j) =0;  \\
& a(x_i,t_j) D^-_xW_1(x_i,t_j) =-D^-_tW_0(x_i,t_j), \quad W_1(L,t_j) =0; \\
& L^{N,M}X (x_i,t_j)=-D^-_tW_1(x_i,t_j), \\
& X(0,t_j)=-(W_0+W_1)(0,t_j), \  X(x_i,0)=-W_0(0,0)-W_1(x_i,0) .
\end{align*}
By noting that $W(x_i,0)= W_0(x_i,0)-W_0(0,0)$, we see that
\[
W=W_0+W_1+X.
\]
For any mesh function $Z$, if
\begin{subequations}\label{closed-form}
\begin{align}
 D_x^-Z(x_i)&=g(x_i), \, x_i < x_N, \quad Z(x_N)=g(x_N), \quad  \hbox{then} \\
  Z(x_i) &=g(x_N) - \sum _{n=i+1}^N h_ng(x_n).
\end{align}
\end {subequations}
Observe also  that
\begin{subequations}\label{star}
\begin{align}
\frac{\partial }{\partial x} \Bigl (\frac{\partial w_0}{\partial t}  \Bigr)(x,t)  &= f(x) \frac{\partial a^{-1}}{\partial t} ,\quad  \frac{\partial w_0}{\partial t}  (\infty ,t) =0 \quad \hbox{and} \\
 D_x^-\bigl( D_t^- W_0) (x_i,t_j) & = f(x_i) D^-_t(a^{-1}(x_i,t_j)), \ D_t^- W_0(L,t_j)=0.
\end{align}
\end {subequations}
Hence, using (\ref{closed-form}), we deduce that
\begin{eqnarray*}
\vert W_0(x_i,t_j)  \vert , \ \vert D^-_tW_0(x_i,t_j)  \vert, \ \vert W_1(x_i,t_j)  \vert , \ \vert D^-_tW_1(x_i,t_j)  \vert \leq Ce^{-\frac{x_i}{\ve}}.
\end{eqnarray*}
Then  for $W_n, \, n=0,1$ , outside the fine mesh, where $x_i \geq \sigma$,  we have
\begin{align*}
\vert (W_n - w_n)(x_i,t_j) \vert & \leq \vert W_n (x_i,t_j) \vert +\vert  w_n(x_i,t_j) \vert \leq C N^{-1};\\
\vert D^-_t(W_n - w_n)(x_i,t_j) \vert & \leq  \vert D^-_tW_n (x_i,t_j)\vert +C \left \Vert  \frac{\partial w_n}{\partial  t} (x_i,t) \right \Vert \leq C N^{-1},
\end{align*}
and within the fine mesh, where $x _i < \sigma$,
\begin{align*}
\vert D_x^- (W_0 -w_0)(x_i,t_j)  \vert & \leq C \left \vert D_x^-w_0 - \frac{\partial w_0 }{\partial x} \right \vert \leq C\frac{h}{\ve ^2} e^{-\frac{x_i}{\ve}}, \\
\vert (W_0 -w_0)(x_i,t_j)  \vert & \leq C \sum _{n=i+1}^{N/2} \frac{h^2}{\ve ^2} e^{-\frac{x_i}{\ve}} +CN^{-1} \leq CN^{-1} \ln N.
\end{align*}
Using  (\ref{star}) and repeating the argument we get that for $x_i < \sigma$
\[
\vert D^-_t (W_0 -w_0)(x_i,t_j)  \vert \leq CN^{-1} \ln N +CM^{-1},
\]
and then for $x_i < \sigma$
\begin{align*}
\vert (W_1 -w_1)(x_i,t_j)  \vert & \leq  CN^{-1} \ln N +CM^{-1}, \\
\vert D^-_t (W_1 -w_1)(x_i,t_j)  \vert & \leq CN^{-1} \ln N +CM^{-1}.
\end{align*}
From all of these bounds,  we can then deduce that
\[
\vert (X -\chi )(x_i,t_j)  \vert \leq  CN^{-1} \ln N +CM^{-1}.
\]
Combine all of these bounds together to establish the nodal error bound
\[
\vert (W - w)(x_i,t_j) \vert \leq CN^{-1} (\ln N)^2 +CM^{-1}, \quad (x_i,t_j) \in \Omega ^N_{SU}.
\]
This can be extended to a global error bound as in Theorem \ref{Result1}.
\end{proof}
\begin{theorem}\label{Result3}
If $W$ is the solution of (\ref{discrete-away}), $V$ and $Z$ are the discrete approximations to $v,z$
and $\bar U=\bar V+\bar W+\bar Z$,   then
\[
\Vert \bar U -u \Vert  \leq  CN^{-1} (\ln N)^2+ CM^{-1}.
\]
where $u$ is the solution of (\ref{main-problem}), (\ref{compat}), (\ref{flow-away-gen}).
\end{theorem}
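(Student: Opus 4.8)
The plan is to prove Theorem~\ref{Result3} by the triangle inequality applied to the three-term decomposition $u=v+w+z$ of~\eqref{decomposition-gen} and the corresponding discrete decomposition $\bar U=\bar V+\bar W+\bar Z$. Since all three global approximations are continuous functions on $\bar\Omega$ (each formed by interpolating its own nodal values, on a uniform mesh for $\bar V$, on $\Omega^N_{SU}$ for $\bar W$, and on the characteristic-aligned mesh of \S\ref{sec:pulse} for $\bar Z$), no mesh-matching issue arises and I can simply write
\[
\Vert \bar U - u \Vert \le \Vert \bar V - v \Vert + \Vert \bar W - w \Vert + \Vert \bar Z - z \Vert,
\]
and bound each summand separately using results already established.

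For the regular component, I would observe that $v$ solves $Lv=0$ with boundary/initial data $v(0,t)=u(0,t)=\psi(t)$ and $v(x,0)=u(x,0)=\phi(x)$, which are smooth and independent of $\ve$; together with the stipulation $v\in C^2(\bar\Omega)$ in~\eqref{decomposition-gen}, this yields $\ve$-uniform bounds on the second derivatives of $v$. Hence the classical upwind scheme on the uniform mesh used for $\bar V$ falls exactly under Theorem~\ref{Result0}, giving $\Vert \bar V - v \Vert \le C(N^{-1}+M^{-1})$. For the boundary-layer component I would invoke Theorem~\ref{Main} verbatim, since $\bar W$ is precisely the approximation to $w$ analysed there, yielding $\Vert \bar W - w \Vert \le CN^{-1}(\ln N)^2 + CM^{-1}$.

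The only step requiring genuine verification is the interior-layer component $z$, and I expect this to be the main obstacle. Here $\bar Z$ is generated by the algorithm of \S\ref{sec:pulse} with $d=0$, so I must confirm that the initial datum $z(x,0)=-w(x,0)$ satisfies the pulse hypotheses~\eqref{pulse}c with $\ell\ge 3$. Since $a(x,0)w_x(x,0)=f(x)$ and $w(x,0)=\int_0^x f(s)/a(s,0)\,ds$, the forcing bound in~\eqref{flow-away-gen}a, namely $|f(x)|\le Cx^2\ve^{-3}e^{-x/\ve}$ with $f(0)=f'(0)=0$, together with $a\ge\alpha>0$ gives $|z(x,0)|\le Cx^3\ve^{-3}e^{-x/\ve}$ and analogously for its first two $x$-derivatives, so that~\eqref{pulse}c holds with $\ell=3$. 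This is exactly the value for which the regularity argument behind the lemma bounding $\widetilde w$ in \S\ref{sec:pulse} is valid, and it also secures $z\in C^2(\bar\Omega)$ as already asserted after~\eqref{decomposition-gen}. The \S\ref{sec:pulse} analysis then delivers $\Vert \bar Z - z \Vert \le CN^{-1}(\ln N)^2 + CM^{-1}$. Adding the three contributions, the $N^{-1}(\ln N)^2$ terms dominate the lone $N^{-1}$ from $\bar V$, and the result
\[
\Vert \bar U - u \Vert \le CN^{-1}(\ln N)^2 + CM^{-1}
\]
follows. As in Theorem~\ref{Main}, I would finally note that the nodal bounds extend to the stated global bound via the interpolation estimates used in Theorem~\ref{Result1}.
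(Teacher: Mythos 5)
Your assembly (triangle inequality over $u=v+w+z$, Theorem~\ref{Result0} for $\bar V$, Theorem~\ref{Main} for $\bar W$, the \S\ref{sec:pulse} machinery for $\bar Z$) is exactly the intended one --- the paper in fact states Theorem~\ref{Result3} without proof, leaving precisely this assembly implicit --- and your treatment of $v$ and $w$ is correct. The gap is in the step you yourself flagged as the crux: the claim that $z(x,0)$ satisfies the pulse hypothesis (\ref{pulse}c) with $\ell=3$ is false, and not repairable by changing $\ell$. From \eqref{w01} and \eqref{decomposition-gen}, $z(x,0)=w_0(0,0)-w_0(x,0)=-\int_0^x f(s)/a(s,0)\,ds$, and integrating the bound $\vert f(s)\vert \le Cs^2\ve^{-3}e^{-s/\ve}$ only gives $\vert z(x,0)\vert \le C\int_0^{x/\ve}r^2e^{-r}\,dr$, a quantity that \emph{increases} to a constant as $x/\ve\to\infty$; it is not bounded by $C(x/\ve)^3e^{-x/\ve}$, which decreases to zero. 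One cannot obtain a decaying bound by integrating a decaying integrand from $0$. In fact $z(x,0)\to w_0(0,0)=-\int_0^\infty f/a\,ds$, a nonzero $O(1)$ constant in general: for the admissible data $a\equiv 1$, $b\equiv 0$, $\psi\equiv\phi\equiv 0$, $f(x)=\ve^{-1}(x/\ve)^2e^{-2x/\ve}$ one gets $z(x,0)=-\int_0^{x/\ve}r^2e^{-2r}\,dr\to -1/4$. So $z(\cdot,0)$ is a smoothed \emph{step} of $O(1)$ height and width $O(\ve)$, not a pulse; condition (\ref{pulse}c) fails for $i=0$ for every $\ell$ (only the derivatives $z_x(x,0)=-f/a$ and $z_{xx}(x,0)$ decay).

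This matters because the \S\ref{sec:pulse} analysis uses the decay of the layer function itself, not just of its derivatives: outside the fine mesh the error is controlled by $\vert \widetilde W\vert + \vert \widetilde w\vert$, both small only because the pulse is exponentially small there, and the scheme \eqref{scheme-P-singular} imposes $\widetilde W(L-d,t_j)=0$ while the construction sets $\bar W\equiv 0$ off the characteristic strip. For $z$ all of this contradicts the exact solution, since by the closed form given after \eqref{decomposition-gen}, $z(x,t)=w_0(0,t)-w_0(x-g(t),t)$ is $O(1)$ everywhere to the right of the characteristic $x=g(t;0,0)$. What the theorem needs is an error analysis for a step-type interior layer: in the variable $s=x-g(t;0,0)$ one has $\widetilde z(s,t)=w_0(0,t)-w_0(s,t)$ for $s\ge 0$ and $\widetilde z\equiv 0$ for $s\le 0$; the plateau term $w_0(0,t)$ is constant in $s$ with $O(1)$ $t$-derivatives and is handled by a classical truncation-error and comparison argument, while the decaying term $-w_0(s,t)$, which obeys $\vert\partial_s^i\partial_t^j w_0\vert\le C\ve^{-i}e^{-s/\ve}$, is handled by the Shishkin-mesh barrier argument of Lemma~\ref{Lemma4} and Theorem~\ref{Main}; the outflow value and the truncation of $\bar Z$ must be adapted to the nonzero limit $w_0(0,t)$ rather than to zero. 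With such an argument in place of the false reduction to the class \eqref{pulse}, the rest of your proof goes through; as written, the bound claimed for $\Vert \bar Z - z\Vert$ is unjustified.
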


\begin{remark} In the case where $b(x,t) \neq 0$ and $f(x,t)$, we define $w_0$ as the solution of
\[
a(x,t) \frac{\partial w_0}{\partial x}  +b(x,t) w_0= f(x,t), \quad w_0(\infty ,t) =0.
\]
 Then, the numerical method is given by
%\begin{subequations}\label{discrete-away-bpres}
\begin{align*}
&L^{N,M}W(x_i,t_j)=f(x_i,t_j), \quad  W(0,t_j)=0, \ t_j \geq 0 \\
&(aD^-_x+bI)W(x_i,0)=f(x_i,0), \ x _i >0.
\end{align*}
%\end {subequations}
%If $a_t(0,0)=0$ and $b_t(x,0)=0$ then $(w_0)_t(0,0)=0$ and the error bound in Theorem \ref{Result3}  applies.
\end{remark}

\section{Numerical results} \label{sec:numerical}

In this final section we examine a problem which is motivated by a mathematical model of fluid-particle interaction in particle-laden flows  \cite{baranger,Lagoutiere,astro}.
The numerical solution of the problem requires the use of the three algorithms described in \S\ref{sec:FlowTowards}, \ref{sec:pulse} and \ref{sec:FlowAway}.  Let us consider two-phase  fluid flow composed of  a continuously connected phase (e.g. a gas, the carrier phase) and a dispersed phase (e.g. small particles) in a one-dimensional duct.  Assuming all the particles move at the same constant velocity $u_p$, the flow is incompressible, the pressure  gradients and the viscous loses are negligible \cite{baranger}, we have the following model to determine  the velocity of the fluid $u$,  the temperature of the fluid $T$,  the cumulative density of the particles $N_p$ in the $x$ direction  and the temperature of the particles $T_p$:
{\footnote{ The model parameters in the system correspond to a  drag coefficient $\lambda$, a convection coefficient $h$, the surface of the  particle $S_p$, the fluid density $\rho$, the specific heat capacity of the fluid $C_p $,  the thermal diffusivity $k$, the thermal  emissivity $\epsilon$, the Stefan-Boltzmann constant $\varsigma$ and the heat source temperature $T_{\infty}$. }}
\begin{align*}
\frac{\partial u}{\partial t}+u \frac{\partial u}{\partial x} &= -\lambda  S_p(u-u_p) \frac{\partial N_p}{\partial x}; \\
\frac{\partial T}{\partial t}+u \frac{\partial T}{\partial x} &=  r S_p(T_p-T) \frac{\partial N_p}{\partial x}+ \frac{1}{\rho C_p } \frac{\partial ^2 ( kT) }{\partial x^2}, \quad r := \frac{h}{\rho C_p };  \\
\frac{\partial N_p}{\partial t}+u_p \frac{\partial N_p}{\partial x} &= 0;\\
\frac{\partial T_p}{\partial t}+u_p\frac{\partial T_p}{\partial x} &= -r  S_p(T_p-T)+ F, %\epsilon \sigma S_p(T_\infty^4-T_p^4).
\end{align*}
 where $F$ may be a function of the problem variables and other physical parameters and denotes a heat source term. For example,  if the particles are heated by radiation, then $F(\epsilon,\varsigma,S_p,T_\infty,T_p)=\epsilon \varsigma S_p(T_\infty^4-T_p^4)$.

Assuming that the reference frame moves with the particles (i.e. $x \rightarrow x+u_pt$), that the thermal diffusivity is small and that $T_p \gg T$, so that $T_p-T$ can be assumed constant, then the equation for the   fluid temperature  is of the general form:  Find $T(x,t)$ such that
\begin{subequations}\label{Example}
\begin{align}
&T_t+\omega _0 (x,t)  T_x = \beta z_x, \quad (x,t) \in (0,L] \times (0, T_f];\\
&T(0,t) = T_0, \ t \geq 0, \  T(x,0)= \phi (x):= T_0 +A_0e^{-\frac{(x-d_0)^2}{\mu}}, \, 0 \leq x \leq L; \\
&z(x,t) = A_1 \tanh \left( \frac{x-d_1}{\ve } \right).
\end{align}
 Around  $x=d_0$, the temperature is higher at the initial time. There is also a concentration of particles located at $x=d_1 > d_0$ which will exchange energy with the fluid, increasing its temperature. Note that the parameter $\ve$ controls the thickness of the layer of particles. The extreme case of $\ve$ very small corresponds to a single layer of particles.  As the numerical methods designed in this paper are parameter-uniform, the numerical approximations will converge to the true solution irrespective of how small $\ve>0$ is and their global pointwise accuracy only depends on the number of mesh elements used in the computations.

In the case of the fluid temperature, if the velocity $\omega _0$ is constant we can write out the exact solution explicitly
as
\[
T(x,t) =
\begin{cases}
\phi (x-\omega_0 t) + \frac{\beta A_1}{{ \omega_0}} \left(\tanh  \left( \frac{x-d_1}{\ve } \right) -\tanh  \left( \frac{x-{ \omega_0} t-d_1}{\ve } \right) \right) , & x \geq{ \omega_0} t, \\
T_0+ \frac{\beta A_1}{ \omega_0} \left(\tanh  \left( \frac{x-d_1}{\ve } \right) +\tanh  \left( \frac{d_1}{\ve } \right) \right), & x \leq { \omega_0} t.
\end{cases}
\]
Here we shall take a variable  fluid velocity for the fluid temperature as
\begin{equation}
\omega _0 (x,t) =2- \frac{x}{L} \ge  1.
\end{equation}
In our numerical experiments we take the sample parameter values of
\begin{align}
L=10, \ T_f=5, \ \beta =1, \ A_0=50, \ A_1=10, \\
\mu = \vr/4, \ T_0=300, \ d_0=2, \ d_1=5.
\end{align}
\end{subequations}
As the problem (\ref{Example}) is linear, we can split the solution as follows.
\[
T(x,t) = T_0+P(x,t) + R(x,t),
\]
where $P$ solves the problem
\begin{subequations}\label{component -P}
\begin{align}
&P_t+\omega _0  P_x = 0, \ (x,t) \in (0,L] \times (0, T_f];\\
&P(0,t) = 0, \ t \geq 0, \ P(x,0)= \phi (x) -T_0, \ 0 \leq x \leq L.
\end{align}
\end{subequations}
The component $P$ corresponds to the initial pulse at $x=d_0$ transported in time.
To solve for $P$ numerically, we  align the mesh to the characteristic
\[
\frac{dx}{dt}=\omega _0(x), \ x(0)=d_0,
\] and follow the algorithm in \S\ref{sec:pulse}.  Note that the regular component of $P$ is a constant function which takes the value of 300; and so it is only necessary to numerically approximate the singular component of $P$ with the scheme~\eqref{scheme-P-singular}. The transition point is of the form
\[
\min \left \{\frac{d_0}{2}, \frac{d_1}{2},  \sqrt{\mu} \ln N \right \},
\]
either side of $s=0$, where $s=x- g(t;d_0,0)$.
The second  temperature component $R$ (coming from the presence of the particles at $x=d_1$) is the solution of
\begin{subequations}\label{component -R}
\begin{align}
&R_t+\omega _0  R_x = \beta z_x, \quad (x,t) \in (0,L] \times (0, T_f];\\
&R(0,t) = 0, \ t \geq 0, \qquad  R(x,0)= 0, \ x \geq 0.
\end{align}
\end{subequations}
An approximation to $R$ is generated separately on $\Omega ^-:= (0,d_1] \times (0, T_f]$ and $\Omega ^+:=(d_1,L] \times (0, T_f]$.
As $z_x(d_1,0) \neq 0$, we observe that $R \not\in C^1(\bar \Omega ^+)$.
We use the fitted operator algorithm on the Shishkin mesh $\Omega _S^N$ (described in \S\ref{sec:FlowTowards}) to generate an approximation $R^N$ to $R$ on the domain $\bar \Omega ^-$.
Hence, $R^N(d_1,t_j) = \Psi (t_j)$ will be determined.
Finally, over $\Omega ^+$ we further subdivide $R=S+I$ where $S \in C^0(\bar \Omega ^+)\setminus  C^1(\bar \Omega ^+)$
\begin{subequations}\label{component -S}
\begin{align}
&S_t+\omega _0  S_x = \beta z_x, \ (x,t) \in \Omega ^+; \\
&S(d_1,t) = 0, \ t \geq 0, \quad  S(x,0)=\beta \int _{s=d_1}^x\frac{z_x(s)}{\omega _0(s) }  ds  , \ d_1 \leq x \leq L,
\end{align}
\end{subequations}
and $I \in C^1(\bar \Omega ^+)$ is such that
\begin{subequations}\label{component -I}
\begin{align}
&I_t+\omega _0  I_x = 0, \quad (x,t) \in \Omega ^+:= (d_1,L] \times (0, T_f], \\
&I(d_1,t) = R(d_1,t), \ t \geq 0, \qquad  I(x,0)= -S(x,0),\ d_1 \leq x \leq L.
\end{align}
\end{subequations}

To approximate $S$ over $\Omega ^+$ we use  an upwind operator on the Shishkin mesh $\Omega _{SU}^N$  and follow the procedure in \S\ref{sec:FlowAway}.

To approximate  the final component $I$ we  use a transformation as in \S\ref{sec:pulse} either side of the characteristic curve $x=g(t;d_1,0)$, where $I=0$ at all points along this curve. Consider the following two subdomains
\[
\Omega ^+_L: = \left \{(x,t) \in \Omega ^+ \ | \  x < g(t;d_1,0) \right \}, \quad \Omega ^+_R: = \left \{(x,t) \in \Omega ^+ \ |  \ x >g(t;d_1,0) \right \}
\]
and the associated subproblems
\begin{align*}
& I_t+\omega _0  I_x = 0, \ (x,t) \in  \Omega ^+_L \cup  \Omega ^+_R, \\
& I(d_1,t) = 0, \ t \geq 0, \qquad  I(x,0)= -S(x,0),\ 0\leq x -d_1 \\
 & I(x,0) = 0, \ x -d_1\geq 0, \qquad   I(0,t)= \Psi (t) := R(d_1,t),\ 0\leq t .
\end{align*}
From Lemma \ref{Lemma2}, we note that $R(d_1,t) \leq C e^{-\frac{\kappa t}{\ve}}, \omega _0 (x,t) > \kappa$.
An approximation to $I$ over the subdomain $\Omega ^+_R$ is generated using the transformtion $s=x-g(t;d_1,0)$ and  over the subdomain $\Omega ^+_L$ using the transformtion $\tau =t-g^{-1}(x;d_1,0))$. The numerical method described in \S\ref{sec:pulse} is then applied, with
\[
\sigma _0 = \min \left\{\frac{L}{2}, 2\ve \ln N \right\}  \ \text{ and } \tau _0 = \min \left \{\frac{T}{2}, 2\ve \ln M \right\}.
\]
Then we form $\bar R^N = \bar S^N+\bar I^N$ over $\Omega ^+$.

 In the diagram of Figure~\ref{fig:Diagram} we identify the  components $P,R,S$ and $I$ defined, respectively, in the subdomains $\bar \Omega$, $\bar \Omega^-$, $\bar \Omega^+$ and $\bar \Omega^+_L\cup\bar \Omega^+_R$. All these components are used in our algorithm to approximate the fluid temperature $T$, which is the solution of (\ref{Example}).

\begin{figure}[h!]
\centering
\resizebox{\linewidth}{!}{
		\includegraphics[scale=1, angle=0]{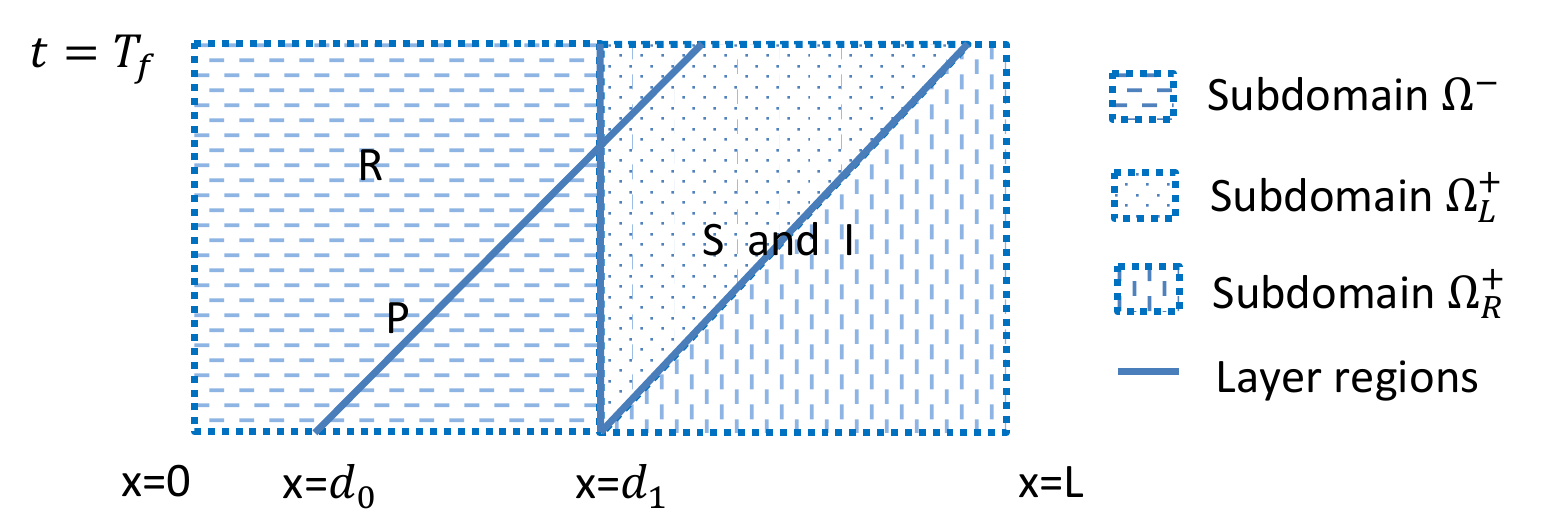}
}
	\caption{Layers regions, subdomains and components of the solution of Example~\eqref{Example}.}
	\label{fig:Diagram}
 \end{figure}

 In Tables~\ref{table:P}-\ref{table:IR} we give the maximum two-mesh global differences and the orders of convergence~\cite{fhmos} associated with the components $P$ (in $\bar \Omega$), $R$ (in $\bar \Omega^-$), $S$ (in $\bar \Omega^+$) and $I$ (in $\bar \Omega^+_L$ and $\bar \Omega^+_R$). The numerical results are only shown for the values of $\vr=2^0,2^{-2},2^{-4},2^{-6}$ and $2^{-30}$, but the uniform two-mesh global differences $D^{N,M}$ and the uniform orders of convergence $P^{N,M}$ are given in the last row of each table taking $\vr=2^{0},2^{-2},\ldots,2^{-30}$. These numerical results indicate that the numerical scheme converges globally and uniformly with almost first order for all the components in agreement with  the theoretical error bounds established in \S 3-5.

\begin{table}[h]
\caption{Maximum and uniform two-mesh global differences and their corresponding orders of convergence for the component $P$ (\ref{component -P}). The numerical results have been obtained with the  scheme~\eqref{scheme-P-singular}}
\begin{center}{\tiny \label{table:P}
\begin{tabular}{|c||c|c|c|c|c|c|c|}
 \hline  & N=M=32 & N=M=64 & N=M=128 & N=M=256 & N=M=512 & N=M=1024& N=M=2048 \\
\hline \hline
 $\vr=2^{0}$
&2.424E+00 &1.365E+00 &7.220E-01 &3.731E-01 &1.903E-01 &9.615E-02 &4.833E-02 \\
&0.829&0.919&0.952&0.971&0.985&0.992&
%\\ \hline $\vr=2^{-1}$
%&3.256E+00 &1.838E+00 &9.899E-01 &5.141E-01 &2.636E-01 &1.336E-01 &6.730E-02 \\
%&0.825&0.893&0.945&0.964&0.980&0.990&
\\ %\hline
$\vr=2^{-2}$
&3.375E+00 &2.390E+00 &1.343E+00 &7.115E-01 &3.674E-01 &1.869E-01 &9.430E-02 \\
&0.498&0.831&0.917&0.954&0.975&0.987&
%\\ \hline $\vr=2^{-3}$
%&3.027E+00 &2.050E+00 &1.407E+00 &9.373E-01 &5.111E-01 &2.617E-01 &1.325E-01 \\
%&0.562&0.543&0.586&0.875&0.966&0.982&
\\ %\hline
$\vr=2^{-4}$
&3.030E+00 &2.052E+00 &1.410E+00 &8.574E-01 &5.024E-01 &2.881E-01 &1.685E-01 \\
&0.562&0.542&0.717&0.771&0.802&0.773&
%\\ \hline $\vr=2^{-5}$
%&3.031E+00 &2.054E+00 &1.411E+00 &8.585E-01 &5.032E-01 &2.885E-01 &1.615E-01 \\
%&0.562&0.541&0.717&0.771&0.802&0.837&
\\ %\hline
$\vr=2^{-6}$
&3.032E+00 &2.054E+00 &1.412E+00 &8.591E-01 &5.036E-01 &2.888E-01 &1.616E-01 \\
&0.562&0.541&0.717&0.771&0.802&0.837&
%\\ \hline $\vr=2^{-7}$
%&3.032E+00 &2.055E+00 &1.412E+00 &8.594E-01 &5.038E-01 &2.889E-01 &1.617E-01 \\
%&0.562&0.541&0.716&0.770&0.802&0.837&
\\ %\hline
$\vr=2^{-8}$
&3.032E+00 &2.055E+00 &1.412E+00 &8.595E-01 &5.039E-01 &2.890E-01 &1.618E-01 \\
&0.561&0.541&0.716&0.770&0.802&0.837&
\\ % \hline
$\vdots$
&$\vdots$ &$\vdots$ &$\vdots$ &$\vdots$ &$\vdots$ &$\vdots$ &$\vdots$
\\ %\hline
$\vr=2^{-30}$
&3.033E+00 &2.055E+00 &1.412E+00 &8.597E-01 &5.040E-01 &2.891E-01 &1.618E-01 \\
&0.561&0.541&0.716&0.770&0.802&0.837&
\\ \hline $D^{N,M}$
&3.375E+00 &2.390E+00 &1.412E+00 &9.373E-01 &5.111E-01 &2.891E-01 &1.685E-01 \\
$P^{N,M}$ &0.498&0.759&0.592&0.875&0.822&0.778&\\ \hline \hline
\end{tabular}}
\end{center}
\end{table}

\begin{table}[h]
\caption{Maximum and uniform two-mesh global differences and their corresponding orders of convergence for the component $R$ (\ref{component -R}) in the subdomain $\bar \Omega^-$. The numerical results have been obtained with the  { scheme~\eqref{discrete-problem}} }
\begin{center}{\tiny \label{table:R}
\begin{tabular}{|c||c|c|c|c|c|c|c|}
 \hline  & N=M=32 & N=M=64 & N=M=128 & N=M=256 & N=M=512 & N=M=1024& N=M=2048 \\
\hline \hline
 $\vr=2^{0}$
 &2.597E-01 &1.356E-01 &7.431E-02 &3.916E-02 &2.016E-02 &1.024E-02 &5.159E-03 \\
&0.938&0.867&0.924&0.958&0.978&0.989&
%\\ \hline $\vr=2^{-1}$
%&2.903E-01 &1.808E-01 &1.226E-01 &6.884E-02 &3.873E-02 &1.993E-02 &1.011E-02 \\
%&0.683&0.561&0.832&0.830&0.959&0.979&
\\ %\hline
$\vr=2^{-2}$
&2.928E-01 &1.812E-01 &1.111E-01 &6.804E-02 &3.952E-02 &2.272E-02 &1.276E-02 \\
&0.692&0.706&0.707&0.784&0.799&0.833&
%\\ \hline $\vr=2^{-3}$
%&2.952E-01 &1.818E-01 &1.108E-01 &6.638E-02 &3.942E-02 &2.283E-02 &1.277E-02 \\
%&0.699&0.714&0.740&0.752&0.788&0.838&
\\ %\hline
$\vr=2^{-4}$
&2.962E-01 &1.824E-01 &1.107E-01 &6.631E-02 &3.988E-02 &2.262E-02 &1.270E-02 \\
&0.700&0.720&0.740&0.734&0.818&0.833&
%\\ \hline $\vr=2^{-5}$
%&2.960E-01 &1.826E-01 &1.088E-01 &6.776E-02 &3.985E-02 &2.261E-02 &1.274E-02 \\
%&0.697&0.747&0.683&0.766&0.818&0.827&
\\ %\hline
$\vr=2^{-6}$
&2.960E-01 &1.826E-01 &1.089E-01 &6.774E-02 &3.932E-02 &2.277E-02 &1.268E-02 \\
&0.697&0.746&0.685&0.785&0.788&0.844&
%\\ \hline $\vr=2^{-7}$
%&2.960E-01 &1.826E-01 &1.106E-01 &6.773E-02 &3.931E-02 &2.276E-02 &1.268E-02 \\
%&0.697&0.723&0.708&0.785&0.788&0.844&
\\ %\hline
$\vr=2^{-8}$
&2.960E-01 &1.826E-01 &1.106E-01 &6.624E-02 &3.983E-02 &2.259E-02 &1.268E-02 \\
&0.697&0.723&0.740&0.734&0.818&0.833&
\\ % \hline
$\vdots$
&$\vdots$ &$\vdots$ &$\vdots$ &$\vdots$ &$\vdots$ &$\vdots$ &$\vdots$
\\ %\hline
$\vr=2^{-30}$
&2.959E-01 &1.828E-01 &1.107E-01 &6.777E-02 &3.874E-02 &2.192E-02 &1.282E-02 \\
&0.695&0.724&0.707&0.807&0.822&0.773&
\\ \hline $D^{N,M}$
&2.962E-01 &1.828E-01 &1.226E-01 &6.884E-02 &3.995E-02 &2.323E-02 &1.381E-02 \\
$P^{N,M}$ &0.697&0.576&0.832&0.785&0.782&0.751&\\ \hline \hline
\end{tabular}}
\end{center}
\end{table}

\begin{table}[h]
\caption{Maximum and uniform two-mesh global differences and their corresponding orders of convergence for the component $S$  (\ref{component -S}) in the subdomain $\bar \Omega^+$. The numerical results have been obtained with the { scheme~\eqref{discrete-away}}}
\begin{center}{\tiny \label{table:S}
\begin{tabular}{|c||c|c|c|c|c|c|c|}
 \hline  & N=M=32 & N=M=64 & N=M=128 & N=M=256 & N=M=512 & N=M=1024& N=M=2048 \\
\hline \hline
 $\vr=2^{0}$
 &2.610E-01 &1.303E-01 &6.513E-02 &3.255E-02 &1.627E-02 &8.136E-03 &4.068E-03 \\
&1.002&1.001&1.000&1.000&1.000&1.000&
%\\ \hline $\vr=2^{-1}$
%&2.914E-01 &1.809E-01 &1.226E-01 &6.513E-02 &3.256E-02 &1.628E-02 &8.138E-03 \\
%&0.688&0.561&0.912&1.000&1.000&1.000&
\\ %\hline
$\vr=2^{-2}$
&2.950E-01 &1.815E-01 &1.085E-01 &6.320E-02 &3.611E-02 &2.031E-02 &1.128E-02 \\
&0.701&0.743&0.779&0.808&0.830&0.848&
%\\ \hline $\vr=2^{-3}$
%&2.970E-01 &1.823E-01 &1.086E-01 &6.322E-02 &3.611E-02 &2.031E-02 &1.128E-02 \\
%&0.704&0.747&0.781&0.808&0.830&0.848&
\\ %\hline
$\vr=2^{-4}$
&2.967E-01 &1.827E-01 &1.088E-01 &6.326E-02 &3.611E-02 &2.031E-02 &1.128E-02 \\
&0.699&0.748&0.782&0.809&0.830&0.848&
%\\ \hline $\vr=2^{-5}$
%&2.965E-01 &1.827E-01 &1.089E-01 &6.330E-02 &3.612E-02 &2.031E-02 &1.128E-02 \\
%&0.699&0.746&0.783&0.809&0.831&0.848&
\\ %\hline
$\vr=2^{-6}$
&2.964E-01 &1.826E-01 &1.089E-01 &6.333E-02 &3.613E-02 &2.031E-02 &1.128E-02 \\
&0.699&0.746&0.782&0.810&0.831&0.848&
\\ %\hline
$\vdots$
&$\vdots$ &$\vdots$ &$\vdots$ &$\vdots$ &$\vdots$ &$\vdots$ &$\vdots$
\\ %\hline
$\vr=2^{-30}$
&2.964E-01 &1.826E-01 &1.089E-01 &6.332E-02 &3.614E-02 &2.032E-02 &1.128E-02 \\
&0.699&0.746&0.782&0.809&0.831&0.848&
\\ \hline $D^{N,M}$
&2.970E-01 &1.827E-01 &1.226E-01 &6.513E-02 &3.614E-02 &2.032E-02 &1.128E-02 \\
$P^{N,M}$ &0.701&0.576&0.912&0.850&0.831&0.848&\\ \hline \hline
\end{tabular}}
\end{center}
\end{table}

\begin{table}[h]
\caption{Maximum and uniform two-mesh global differences and their corresponding orders of convergence for the component $I$  (\ref{component -I}) in the subdomain $\bar \Omega^+_L$. The numerical results have been obtained with the { scheme~\eqref{scheme-P-singular-related}}}
\begin{center}{\tiny \label{table:IL}
\begin{tabular}{|c||c|c|c|c|c|c|c|}
 \hline  & N=M=32 & N=M=64 & N=M=128 & N=M=256 & N=M=512 & N=M=1024& N=M=2048 \\
\hline \hline
 $\vr=2^{0}$
 &2.597E-01 &1.356E-01 &7.431E-02 &3.916E-02 &2.016E-02 &1.024E-02 &5.159E-03 \\
&0.938&0.867&0.924&0.958&0.978&0.989&
%\\ \hline $\vr=2^{-1}$
%&2.903E-01 &1.808E-01 &1.226E-01 &6.884E-02 &3.873E-02 &1.993E-02 &1.011E-02 \\
%&0.683&0.561&0.832&0.830&0.959&0.979&
\\ %\hline
$\vr=2^{-2}$
&2.928E-01 &1.812E-01 &1.111E-01 &6.804E-02 &4.003E-02 &2.289E-02 &1.281E-02 \\
&0.692&0.706&0.707&0.765&0.806&0.838&
%\\ \hline $\vr=2^{-3}$
%&2.952E-01 &1.818E-01 &1.108E-01 &6.788E-02 &3.993E-02 &2.283E-02 &1.277E-02 \\
%&0.699&0.714&0.707&0.766&0.807&0.838&
\\ %\hline
$\vr=2^{-4}$
&2.962E-01 &1.824E-01 &1.107E-01 &6.780E-02 &3.988E-02 &2.279E-02 &1.275E-02 \\
&0.700&0.720&0.707&0.766&0.807&0.838&
%\\ \hline $\vr=2^{-5}$
%&2.960E-01 &1.826E-01 &1.107E-01 &6.776E-02 &3.985E-02 &2.278E-02 &1.274E-02 \\
%&0.697&0.723&0.708&0.766&0.807&0.838&
\\ %\hline
$\vr=2^{-6}$
&2.960E-01 &1.826E-01 &1.106E-01 &6.774E-02 &3.984E-02 &2.277E-02 &1.274E-02 \\
&0.697&0.723&0.708&0.766&0.807&0.838&
\\ %\hline
$\vdots$
&$\vdots$ &$\vdots$ &$\vdots$ &$\vdots$ &$\vdots$ &$\vdots$ &$\vdots$
\\ %\hline
$\vr=2^{-30}$
&2.960E-01 &1.826E-01 &1.106E-01 &6.772E-02 &3.983E-02 &2.276E-02 &1.273E-02 \\
&0.697&0.723&0.708&0.766&0.807&0.838&
\\ \hline $D^{N,M}$
&2.962E-01 &1.826E-01 &1.226E-01 &6.884E-02 &4.003E-02 &2.289E-02 &1.281E-02 \\
$P^{N,M}$ &0.698&0.575&0.832&0.782&0.806&0.838&\\ \hline \hline
\end{tabular}}
\end{center}
\end{table}

\begin{table}[h]
\caption{Maximum and uniform two-mesh global differences and their corresponding orders of convergence for the component $I$  (\ref{component -I}) in the subdomain $\bar \Omega^+_R$. The numerical results have been obtained with the { scheme~\eqref{scheme-P-singular}}}
\begin{center}{\tiny \label{table:IR}
\begin{tabular}{|c||c|c|c|c|c|c|c|}
 \hline  & N=M=32 & N=M=64 & N=M=128 & N=M=256 & N=M=512 & N=M=1024& N=M=2048 \\
\hline \hline
 $\vr=2^{0}$
 &3.133E-01 &1.600E-01 &8.101E-02 &4.081E-02 &2.049E-02 &1.026E-02 &5.138E-03 \\
&0.969&0.982&0.989&0.994&0.997&0.998&
%\\ \hline $\vr=2^{-1}$
%&3.485E-01 &2.181E-01 &1.480E-01 &7.992E-02 &4.021E-02 &2.018E-02 &1.011E-02 \\
%&0.676&0.559&0.889&0.991&0.995&0.997&
\\ %\hline
$\vr=2^{-2}$
&3.488E-01 &2.185E-01 &1.319E-01 &7.773E-02 &4.448E-02 &2.509E-02 &1.395E-02 \\
&0.675&0.728&0.763&0.805&0.826&0.847&
%\\ \hline $\vr=2^{-3}$
%&3.490E-01 &2.188E-01 &1.322E-01 &7.794E-02 &4.460E-02 &2.516E-02 &1.398E-02 \\
%&0.674&0.727&0.762&0.805&0.826&0.847&
\\ %\hline
$\vr=2^{-4}$
&3.492E-01 &2.190E-01 &1.323E-01 &7.807E-02 &4.469E-02 &2.520E-02 &1.401E-02 \\
&0.673&0.727&0.761&0.805&0.826&0.847&
%\\ \hline $\vr=2^{-5}$
%&3.493E-01 &2.192E-01 &1.324E-01 &7.815E-02 &4.474E-02 &2.523E-02 &1.402E-02 \\
%&0.673&0.727&0.761&0.805&0.826&0.847&
\\ %\hline
$\vr=2^{-6}$
&3.494E-01 &2.192E-01 &1.325E-01 &7.819E-02 &4.476E-02 &2.525E-02 &1.403E-02 \\
&0.672&0.726&0.761&0.805&0.826&0.847&
\\ %\hline
$\vdots$
&$\vdots$ &$\vdots$ &$\vdots$ &$\vdots$ &$\vdots$ &$\vdots$ &$\vdots$
\\ %\hline
$\vr=2^{-30}$
&3.494E-01 &2.193E-01 &1.326E-01 &7.823E-02 &4.479E-02 &2.526E-02 &1.404E-02 \\
&0.672&0.726&0.761&0.805&0.826&0.847&
\\ \hline $D^{N,M}$
&3.494E-01 &2.193E-01 &1.480E-01 &7.992E-02 &4.479E-02 &2.526E-02 &1.404E-02 \\
$P^{N,M}$ &0.672&0.567&0.889&0.835&0.826&0.847&\\ \hline \hline
\end{tabular}}
\end{center}
\end{table}

 The numerical solution for Example~\eqref{Example} with $\vr =2^{-8}$ is displayed in Figure~\ref{fig:Example} after patching the approximations computed for each one of the components of  $T$. In this figure it is observed that the pulse in the initial condition is transported along the characteristic emanating from the point (2,0) and it merges with the interior layer located at $x=5$.   This interior layer is produced by the heat exchange from the particles to the gaseous phase. In addition, another interior layer emanating from the point (5,0) is observed.  A physical explanation for this is that the initial condition for the temperature is not under equilibrium at $x=5$ (i.e. there is no temperature gradient at (5,0) that satisfies a steady solution of Equation \eqref{Example}). As the particles start heating the gaseous phase at the initial time, the latter increases the temperature while being transported downstream, as observed in the figure in the form of an interior layer emanating from (5,0).

\begin{figure}[h!]
\centering
\resizebox{0.8\linewidth}{!}{
		\includegraphics[scale=1, angle=0]{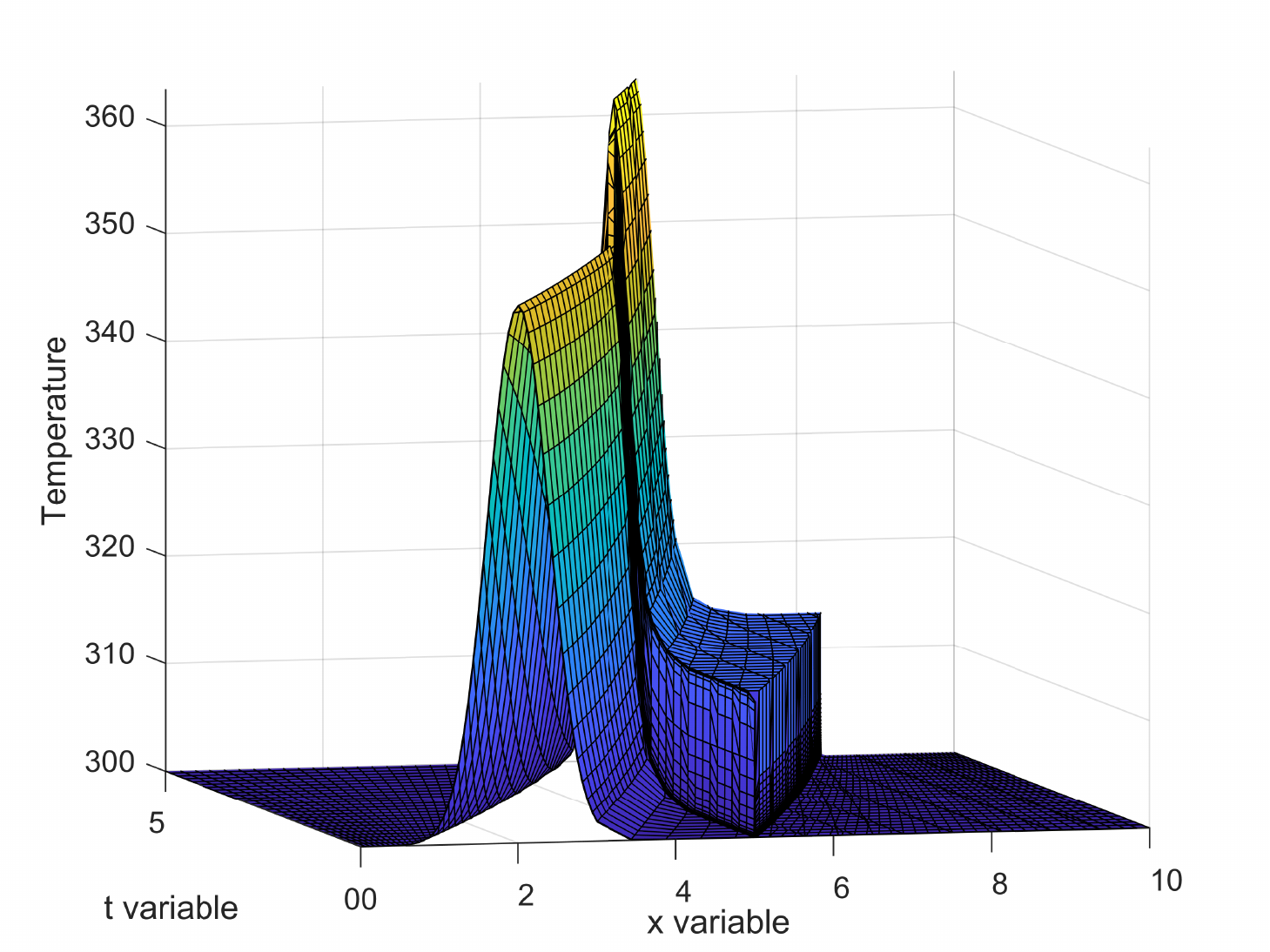}
}
	\caption{ Example~\eqref{Example} with $\vr =2^{-8}$: Computed solution with $N=M=64$.}
	\label{fig:Example}
 \end{figure}

\section*{Conclusions}

Three classes of first order singularly perturbed transport equations are examined. Boundary and interior layers can arise in the solutions of these problems.
Numerical methods are constructed and analysed for these problems  incorporating piecewise-uniform Shishkin meshes, which  insert a significant proportion of the mesh points into the layer regions. In this way, the numerical methods produce parameter-uniform numerical approximations, whose global pointwise accuracy is guaranteed for all possible values of the singular perturbation parameter. A test problem, motivated from modelling fluid-particle interaction, is used to illustrate the performance of these numerical methods.

\section*{ Appendix: Further considerations when approximating a pulse problem} \label{sec:appendix-pulse}

 In \S\ref{sec:pulse} a numerical scheme is described where it is assumed that the convective coefficient $a_x$ does not change sign and the pulse occurs at the initial condition. In this appendix we describe an algorithm to approximate the problem transporting a pulse when either $a_x$ does change sign or the pulse occurs in the boundary condition.
\subsection*{ General case where $a_x$  may change sign}

 If, for some $t=t_j$, the derivative $a_x$  does change sign for $x<g(t; d,0)$ (or  $x>g(t; d,0)$), then  the scheme~\eqref{scheme-P-singular} should not be used to generate a numerical approximation to the singular component~$w$. Observe that in this case the matrix associated with the scheme~\eqref{scheme-P-singular} is a reducible matrix.

 In the case that $a_x$  does change sign, for $s <0$ (or $s>0$ ),
we would simply solve in the fine mesh region $[-\sigma _d, 0]$ (or $[0, \sigma _d]$ ) with the boundary value $\widetilde W(-\sigma _d,t_j) =0$ (or $\widetilde W(\sigma _d,t_j) =0$). That is, we would solve
\begin{align*}
&D_t^- \widetilde W +[\widetilde  a(s_i,t_j) -\widetilde a(0,t_j)]   D^*_ s \widetilde W+\widetilde b\widetilde W= 0, \quad (s_i,t_j)  \in (-\sigma _d,L-d)\times (0,T];\\
&\widetilde W(-\sigma _d,t_j) = \widetilde W(L-d,t_j) = 0; \ t_j>0 \ \widetilde W(s_i,0) = \phi _2(s_i+d) .
\end{align*}
Similarly to the case that $a_x$ does not change sign, a global  approximation $\bar W$ to $w$ is generated over the region $ \bar \Omega$
and we set
\[
\bar W \equiv 0 \text { on } \bar \Omega \backslash\left\{ (x,t), \,  -\sigma_d+g(t;d,0)\le x \le L, \ 0 \le t \le T \right\}.
\]

\subsection*{A pulse in the boundary condition}

Consider problem (\ref{main-problem}), (\ref{compat}) with an $\ve$-dependent boundary condition
\begin{subequations}\label{pulse2}
\begin{align}
&u(0,t) = \psi_1(t) +\psi _2(t;\ve); \\
& \psi _2^{(j)}(0) =0 , \  \vert \psi _2^{(j)}(t) \vert \leq C \ve ^{-j}e^{-\frac{\vert t-d\vert}{\ve }};\ 0 \leq j \leq 2,  \quad  \text{if } 0 <d < T,\\
& \vert \psi _2^{(j)}(t) \vert \leq C t^{ \ell}\ve ^{{ -(\ell +j)}}e^{-\frac{ t}{\ve }};\ 0 \leq j \leq 2;\quad  \ell \geq 3, \quad  \text{if }d=0.
\end{align}
\end{subequations}
 The pulse in the boundary condition is transported along the  characteristic  curve
$x=g_1(t;0,d) $,
 which is the solution of the initial value problem
\begin{equation}\label{charact2}
\frac{dx}{dt} = a(x,t), \qquad x(d)=0.
\end{equation}
To avoid excessive repetition, we outline the method only in the case for $d>0$.
The solution $u$ can be decomposed into the sum of a regular component $v \in C^2(\bar \Omega)$ and a layer component $w\in C^2(\bar\Omega)$, defined as the solutions of
\begin{subequations}\label{decomp*}
\begin{align}
Lv &= f, \quad (x,t)  \in \Omega; \quad v(0,t)=\psi _1(t),\ v(x,0)= u(x,0),\\
Lw &= 0, \quad (x,t)  \in\Omega;\quad w(0,t)=\psi _2(t;\ve) ,\ w(x,0) = 0.
\end{align}
\end{subequations}
 The non-singularly component $v$ is approximated with a classical scheme on a uniform mesh. We examine now the singular component $w$. Note that $w(x,t) \equiv 0, x \geq g_1(t+d; d,0)$.
Consider the transformed variables, $\widehat u(x,\tau ) = u(x,t)$ where
$
\tau := t- g^{-1}_1(x; d,0),
$
then
\[
\left [1-\frac{\widehat a(x,\tau)}{\widehat a(x,0)} \right] \widehat w_\tau+ \widehat a \widehat w_ x + \widehat b\widehat w =0, \quad \widehat w(0,\tau) = \psi _2(\tau+d).
\]
 To generate a parameter-uniform approximation to $w$,  solve the problem:
\begin{subequations}\label{scheme-P-singular-related}
\begin{align}
&\left [1-\frac{\widehat a(x_i,\tau _j)}{\widehat a(x_i,0)}\right] D^*_\tau \widehat W+  \widehat a D_x^- \widehat W +\widehat b\widehat W= 0, \ (x_i,\tau _j)  \in \widehat S_\sigma:= (0,L] \times  (-\sigma,\sigma );\\
&
\widehat W(x_i,-\sigma) = \widehat W(x_i,\sigma) =0; \quad \widehat W(0, \tau _j) =\widehat \psi _2(\tau _j).
\end{align}
\end{subequations}
We  form  the global approximation $\bar U=\bar V + \bar W$ and
\[
\Vert u - \bar U \Vert   \leq C N^{-1} + CM^{-1}(\ln M)^2,
\]
for the problem class (\ref{main-problem}), (\ref{compat}), (\ref{pulse2}).

\end{document}